\newtheorem{theorem}{Theorem}[section]
\newtheorem{lemma}[theorem]{Lemma}
\newtheorem{proposition}[theorem]{Proposition}
\newtheorem{corollary}[theorem]{Corollary}
\theoremstyle{definition}
\newtheorem{definition}[theorem]{Definition}
\newtheorem{remark}[theorem]{Remark}
\theoremstyle{remark}
\newtheorem{case2}{Case}
\numberwithin{equation}{section}
\newcommand{\abs}[1]{\left| \hspace{1pt} #1 \hspace{1pt} \right|}
\newcommand{\norm}[1]{\| \hspace{1pt} #1 \hspace{1pt} \|}
\newcommand{\spa}[1]{\mathrm{span}\left( #1\right)}
\newcommand{\close}[1]{\overline{#1}}
\newcommand{\inj}{\mathop{i_g\hrs{1}}}
\newcommand{\conv}{\mathop{c_g\hrs{1}}}
\newcommand{\sconv}{\mathop{sc_g\hrs{1}}}
\newcommand{\lconv}{\mathop{lc_g\hrs{1}}}
\newcommand{\slconv}{\mathop{slc_g\hrs{1}}}
\newcommand{\dist}[2]{{d_g}({#1},{#2})}
\newcommand{\oball}[2]{{B_g}\left({#2}\hrs{1};{#1}\right)}
\newcommand{\cball}[2]{\overline{B_g\left({#2}\hrs{1};{#1}\right)}}
\newcommand{\sfer}[2]{{S_g}\left({#2}\hrs{1};{#1}\right)}
\newcommand{\foball}[1]{\mathscr{B}_g ({#1})}
\newcommand{\fcball}[1]{\overline{\mathscr{B}}_g({#1})}
\newcommand{\hrs}[1]{\hspace{#1pt}}
\newcommand{\vrs}[1]{\vspace{#1pt}}
\newcommand{\ctype}[1]{{\bf {#1}}}
\newcommand{\cond}[1]{\ifnum#1=1 $\mathcal{C}_{\bf psl}$\else $\mathcal{C}_{\bf ps}$\fi}
\newcommand{\real}[1]{\mathbb{R}^{\hrs{0}#1}}
\newcommand{\nat}[1]{\mathbb{N}^{\hrs{0}#1}}
\title[Convexity Types]{Geodesic Convexity Types in Riemannian Manifolds}
\author{Octavian Mitrea}
\address{Department of Mathematics, University of Western Ontario, London, Ontario, Canada, N6A 5B7}
\email{omitrea@uwo.ca}
\subjclass[2010]{Primary: 53C20; Secondary: 53C22, 53C24}
\keywords{Riemannian manifolds, convexity, injectivity radius, cut locus, conjugate locus}
\begin{document}

\begin{abstract}
The usual notion of set-convexity, valid in the classical Euclidean context, metamorphoses into several distinct convexity types in the more general Riemannian setting. By studying this phenomenon in reverse, we characterize complete manifolds for which certain convexity types are assumed a priori to coincide.
\end{abstract}

\maketitle

\section{Introduction} \label{sec:intro}

Classical convexity of sets in Euclidean space is an intuitive, simple to define property: a set is {\it convex} if it includes every line segment having its endpoints in that subset. The definition of the analog concept in the more general Riemannian context, where the notion of line segment is replaced by that of {\it geodesic segment}, is less straightforward, the situation being somewhat aggravated by the potential existence of more than one geodesic joining two distinct points or, even by the absence of connecting segments, in the case of non-complete metrics. The solution consists in introducing a separate kind of convexity criterion for each relevant scenario thus, producing a variety of convexity types characterizing subsets of the manifold: {\it (proper) convexity}, {\it weak convexity}, {\it total convexity}, {\it strong convexity}, etc.  One can see the above as diversifying one mathematical concept into several related but more specialized notions, their distinctive features addressing the requirements imposed by the more general setting.

One could also look at the same process from a reversed angle and give it the following converse interpretation: when working with subsets of the Euclidean space, many of the different convexity types operating in the Riemannian setting, loose their distinctiveness, and "blend" their stand-alone features back into the classical concept of set-convexity. This seemingly moot change of perspective suggests, in fact, that we may start from the assumption that certain convexity types "coincide", in a sense that will be made precise in this paper, and analyze the impact such assumption may have on the geometry of the manifold. One of the questions we raise and analyze by adopting this approach, is to what extent, such fusion of convexity notions is characteristic to the Euclidean space and, whether there are other geometrically or topologically distinct spaces that accommodate similar phenomena. Specifically, we show in Theorem \ref{th:main} that the only complete Riemannian manifolds in which the proper (usual) and strong convexity types coincide (denoted as \cond{2} manifolds) are those of infinite convexity radius. Combining this with Proposition 4 in \cite{Sul74} we obtain the main result of the paper,
\begin{theorem} \label{th:main0}
A complete manifold $M$ is \cond{2} if and only if $M$ is simply connected and focal-point-free. 
\end{theorem}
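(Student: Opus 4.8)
The plan is to derive Theorem \ref{th:main0} not by an independent argument but as a synthesis of the paper's main technical result with a known characterization, chaining together two biconditionals. First I would invoke Theorem \ref{th:main}, which establishes that a complete Riemannian manifold is \cond{2} precisely when its convexity radius is (identically) infinite; this is the substantive half of the argument and is established separately. The role of the present statement is then purely to translate the metric condition ``infinite convexity radius'' into the topological and geometric language of the conclusion.

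The second ingredient is Proposition 4 in \cite{Sul74}, which I would cite to convert the infinite-convexity-radius condition into simple connectivity together with the absence of focal points: a complete manifold has every metric ball (strongly) convex, equivalently convexity radius equal to $+\infty$, if and only if it is simply connected and focal-point-free. Composing the two equivalences then yields, by transitivity, that $M$ is \cond{2} iff $M$ has infinite convexity radius iff $M$ is simply connected and focal-point-free, which is exactly the assertion. Because the statement is an ``if and only if'', I would emphasize that both directions are obtained for free from this composition, with no need to argue the two implications separately.

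The only genuine care required --- and hence the main, if modest, obstacle --- is definitional reconciliation between the two sources. I would verify that the notion of convexity radius underlying Theorem \ref{th:main} coincides with the one used by Sullivan, in particular that ``infinite convexity radius'' in the former (strong convexity of all balls of all radii) matches the hypothesis of \cite[Proposition 4]{Sul74}, and that ``focal-point-free'' is interpreted identically in both places, namely that no geodesic issuing from a point carries a focal point of that point. Once these conventions are confirmed to agree, no further computation is needed, and I would present the result as a short corollary-style deduction rather than a self-contained proof.
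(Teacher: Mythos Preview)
Your proposal is correct and matches the paper's own argument essentially verbatim: the paper derives Theorem~\ref{th:main0} simply by composing Theorem~\ref{th:main} with Proposition~4 of \cite{Sul74}, with no additional work beyond citing these two results. Your remark about reconciling the definitions of convexity radius and focal-point-free across the two sources is a reasonable caution, though the paper itself does not elaborate on this point.
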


The paper is structured as follows. Section \ref{sec:prelim} provides a high-level account of key prerequisites. We include here Berger's classical inequality \cite{Ber76}, $\displaystyle \conv(M) \leq \frac{\inj(M)}{2}$, for which we give an independently devised proof, based on a known decomposition of the cut locus due to Bishop \cite{Bi77}. Using a minimum amount of model-theoretic techniques, in Section \ref{sec:convequiv} we define in more formal terms the notions of {\it convexity type} and {\it type equivalence}. The definitions are specifically tailored to the scope and goals set for this article, with the sole intention of setting the grounds for a more rigorous presentation, reference and notation mechanism. To prove Theorem \ref{th:main} we first analyze a slightly different scenario, that where the proper, strong and a version of local convexity (subsection \ref{ssec:localconvexity}) types coincide (\cond{1} manifolds). In Section \ref{sec:main1} we show that all such manifolds have, in fact, an infinite (strong) convexity radius. Using this fact, in Section \ref{sec:main2} we prove Theorem \ref{th:main} and implicitly, Theorem \ref{th:main0}. 

I wish to thank the anonymous referee for the most useful criticism on an earlier version of the paper, especially for outlining the steps towards obtaining the final proof of Theorem \ref{th:main}.

\subsection{Notations and Conventions} \label{ssec:notations}
Throughout the paper, $(M, g)$ denotes a connected Riemannian manifold of dimension $n \geq 2$, endowed with a complete metric $g$. By a {\it geodesic segment} we mean a length minimizing geodesic defined on a closed interval. For $x, y \in M$, $\gamma_{xy} : [a, b] \rightarrow M$ denotes the geodesic segment, if it exists and it is unique (up to parameterization), starting at $x$ and ending at $y$. We may write $\gamma_x$ to indicate the geodesic segment $\gamma_{px}$, when $p \in M$ is known in the context. We use the name {\it segment} (as opposed to "geodesic segment") to refer to the image of $\gamma_{xy}$ in $M$: $[xy] = \gamma_{xy}( [a, b] )$. An {\it open segment} is defined as $(xy) = \gamma_{xy}((a, b))$. We will be specific about the geodesics' parameterizations every time the analysis requires it. For a geodesic parameterized by arc length we will often use the term {\it normal geodesic}. All other conventions and notations are part of the standard: $T_pM$ and $TM$ represent the manifold's tangent space at $p \in M$ and its tangent bundle, respectively. The space of smooth sections of $TM$ is denoted by $\mathscr{T}(M)$ maintaining the same notation, $\mathscr{T}(N)$, for its restriction to a subset $N \subset M$. For a subspace $H \subset T_pM$, $\bot H$ indicates the orthogonal complement of $H$ in $T_pM$. If $v \in T_pM$ we write $\bot v$ instead of $\bot \spa{v}$, where $\spa{v}$ indicates the subspace of $T_pM$ generated by $v$. For an immersed submanifold $N \subset M$, we use $\bot_pN$, instead of $\bot T_pN$, to denote the normal space of $N$ at $p \in N$, while $\bot N$ indicates the normal bundle of $N$ in $M$. For $v \in TN$, $w \in \bot N$, $S_w v$ denotes the shape operator of $N$. The open (closed) geodesic balls of radius $r$ centered at $p \in M$ are denoted by $\oball{r}{p}$ ($\cball{r}{p}$) while $\oball{r}{0_p}$ ($\cball{r}{0_p}$) represent open (closed) balls centered at the origin in $T_pM$. Similar notations $\sfer{r}{p}$, $\sfer{r}{0_p}$, apply to the spheres in $M$, $T_pM$ respectively. The connection being considered will always be the (Levi-Civita) connection associated to the metric. The Einstein summation convention will be used throughout the material, whenever feasible.

\section{Preliminaries} \label{sec:prelim}

In this section we briefly review the notions and techniques we employ in this paper.

\begin{definition} \label{def:convex}
Let $(M, g)$ be a complete Riemannian manifold of dimension $n \geq 1$ and $C$ a subset of $M$. 
\begin{enumerate}[$(a)$]
\item \label{it:convex1} $C$ is called {\it convex} if for all $x, y \in C$ there is a unique (up to parameterization) geodesic segment $\gamma_{xy}$ in $M$ and $[xy] \subset C$;
\item \label{it:convex2} $C$ is said to be {\it strongly convex} if for all $x, y \in \close{C}$ (the closure of $C$ in $M$) there is a unique (up to parameterization) geodesic segment $\gamma_{xy}$ in $M$ and $(xy) \subset C$;
\end{enumerate}
\end{definition}

\begin{remark} \label{rem:properconv}
To distinguish it from strong (or other forms of) convexity we may occasionally refer to a convex set as being {\it properly convex}, or to the notion itself as {\it proper convexity}. We may also adopt the same  approach to distinguish local convexity from strong local convexity (see subsection \ref{ssec:localconvexity} below).
\end{remark}

\subsection{A Version of Local Convexity} \label{ssec:localconvexity}
We will also make use of a local version of convexity, defined for geodesic balls, as presented in \cite{BC64}. Denoting by $\dist{\cdot}{\cdot}$ the distance function induced by $g$ on $M$, we first define two distinct properties a geodesic sphere of $M$ may satisfy, adopting a naming convention similar to the one in \cite[p.246]{BC64}:

\begin{definition} \label{def:cc} We say that $\sfer{r}{p} \subset M$ satisfies the {\it convexity condition} (or briefly, it satisfies {\it c.c.}) if for every $x \in \sfer{r}{p}$ and for every geodesic $\gamma$ tangent to $\sfer{r}{p}$ at $x = \gamma(t_0)$, there is $ \varepsilon > 0$ such that $\dist{p}{\gamma(t)} \geq r$, for all $t \in (t_0 - \varepsilon, t_0+ \varepsilon)$. If we require that $\dist{p}{\gamma(t)} > r$ for all $t \in (t_0 - \varepsilon, t_0) \cup (t_0, t_0+ \varepsilon)$, then we say that $\sfer{r}{p}$ satisfies the {\it strong convexity condition} ({\it s.c.c.}).
\end{definition}

\begin{definition} \label{def:localconv1}
A geodesic ball $\oball{r}{p}$ is {\it locally convex} if $\sfer{s}{p}$ satisfies c.c, for all $0<s < r$. Replacing c.c. with s.c.c. we obtain the definition of a {\it strongly locally convex} ball.
\end{definition}

Local convexity is closely related to smooth geodesic variations and associated $N$-Jacobi fields (for more details on this classic concept see for example \cite[pp. 221-224]{BC64}). Let $N$ be an immersed submanifold of $M$ and $\gamma : [0, r] \rightarrow M$, a normal geodesic segment with its starting point in $N$ and perpendicular to $N$. 

\begin{definition} \label{def:njacobi}
An {\it $N$-Jacobi field} $J$ is a Jacobi field along $\gamma$ satisfying:
\begin{enumerate}[\;\;\;(i)]
\item \label{it:njacob1} $J(0) \in T_{\gamma(0)}N$;
\item \label{it:njacob2} $\displaystyle \langle J(t), \dot{\gamma}(t) \rangle = 0$, $\forall t \in [0, r]$;
\item \label{it:njacob3} $\displaystyle S_{\dot{\gamma}(0)}J(0) - D_tJ(0) \in \bot_{\gamma(0)}N$.
\end{enumerate}
\end{definition}

If $N = \{p\}$ is a single point in $M$ we call $J$ a {\it $p$-Jacobi field} and the conditions above reduce to $J(0) = 0$ and $\displaystyle \langle D_tJ(0) , \dot{\gamma}(0) \rangle = 0$. The set of all $N$-Jacobi fields, which we denote by $\mathscr{J}_N(\gamma)$, is a subspace of dimension $n-1$ of the vector space of all Jacobi fields along $\gamma$. Let $I:\mathscr{T}(\gamma) \rightarrow \real{}$ be the quadratic form associated with the index form of $\gamma$. If $\gamma$ has its endpoint $\gamma(r)$ in another immersed submanifold $P$, being also orthogonal to $P$, the restriction of $I$ to $\mathscr{J}_N(\gamma)$ has the form \cite[p.220]{BC64}:
\begin{equation} \label{eq:indexform}
I(J) = \big \langle S_{\dot{\gamma}(0)}J(0) -D_tJ(0), J(0) \big \rangle - \big \langle S_{\dot{\gamma}(r)}J(r) -D_tJ(r), J(r) \big \rangle,
\end{equation}
where we wrote $\langle \cdot , \cdot \rangle$ instead of $g(\cdot,\cdot)$ and the first and second shape operators refer to $N$ and $P$, respectively. The formula follows directly from the definition of the index form and the Jacobi equation.

Our analysis relies in part on a basic construction which can be found, for example, in \cite[p.247]{BC64}, \cite[p.97]{Cha06}. Let $p, x \in M$ such that $r = \dist{p}{x}$ satisfies $0 < r < \inj(p)$, where $\inj(p)$ is the injectivity radius of $p$. Let $\gamma_x : [0, r] \rightarrow M$ be the unique normal geodesic segment starting at $p = \gamma_x(0)$ and ending at $x = \gamma_x(r)$. If $V_x \subset M$ is a neighborhood of $x$, define $N_x = \exp_{x}(\bot\dot{\gamma}_x(r)) \cap V_x$, where $\bot\dot{\gamma}_x(r)$ denotes the orthogonal complement in $T_xM$ of $\spa{\dot{\gamma}_x(r)}$. It can be easily seen that, for a small enough $V_x$, $N_x$ is  an immersed submanifold of codimension $1$ which contains all geodesic segments at $x$ with images in $V_x$, that are orthogonal to $\dot{\gamma}_x(r)$. Denoting by $I(x, J)$, $J \in \mathscr{J}_p(\gamma_x)$, the quadratic form (\ref{eq:indexform}) associated to $\gamma_x$, we get: 
\begin{equation} \label{eq:indexformpjacob}
I(x, J) = \big \langle D_tJ(r), J(r) \big \rangle,
\end{equation}
since for all $J \in \mathscr{J}_p(\gamma_x)$, both shape operators vanish and $J$ vanishes at $0$. The following lemma, relates the definiteness of $I$ with the property of a geodesic sphere satisfying the convexity conditions.

\begin{lemma} \label{lem:ccindexform}
Let $\sfer{r}{p}$ be included in a normal neighborhood of $p$. If $\sfer{r}{p}$ satisfies c.c. then, for all $x \in \sfer{r}{p}$, $I(x, \cdot)$ is positive semi-definite on $\mathscr{J}_p(\gamma_x)$. $\sfer{r}{p}$ satisfies s.c.c if and only if $I(x, \cdot)$ is positive definite on $\mathscr{J}_p(\gamma_x)$, $\forall x \in \sfer{r}{p}$.
\end{lemma}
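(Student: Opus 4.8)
The plan is to reduce both convexity conditions to the second-order behaviour of the distance function $\dist{p}{\cdot}$ along geodesics tangent to the sphere, and then to recognise that second-order term as the index form through (\ref{eq:indexformpjacob}). Fix $x \in \sfer{r}{p}$ and a geodesic $\gamma$ tangent to the sphere at $x = \gamma(t_0)$, and set $h(t) = \dist{p}{\gamma(t)}$. Since $\sfer{r}{p}$ is a regular level set of the smooth function $f = \dist{p}{\cdot}$ inside the normal neighbourhood, tangency gives $h(t_0) = r$ and $h'(t_0) = \langle \nabla f, \dot{\gamma}(t_0)\rangle = 0$. Because $\gamma$ is a geodesic, differentiating once more annihilates the gradient term and yields $h''(t_0) = \mathrm{Hess}\, f(w, w)$, where $w = \dot{\gamma}(t_0) \in \bot\dot{\gamma}_x(r) = T_x\sfer{r}{p}$.

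Next I would identify this Hessian with the index form. The Hessian of $f$ on the tangent space of the sphere is given by the classical Jacobi-field expression $\mathrm{Hess}\, f(w,w) = \langle D_tJ(r), J(r)\rangle$, where $J \in \mathscr{J}_p(\gamma_x)$ is the $p$-Jacobi field determined by $J(r) = w$; comparing with (\ref{eq:indexformpjacob}), this is exactly $I(x, J)$. The assignment $J \mapsto J(r)$ is a linear isomorphism $\mathscr{J}_p(\gamma_x) \to T_x\sfer{r}{p}$: both spaces have dimension $n-1$, and injectivity holds because a nonzero $p$-Jacobi field vanishing at $r$ would render $x$ conjugate to $p$ along $\gamma_x$, which is impossible since $\sfer{r}{p}$ lies in a normal neighbourhood. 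Consequently, as $\gamma$ ranges over all geodesics tangent to the sphere at $x$, the number $h''(t_0) = I(x, J)$ runs over the values of $I(x, \cdot)$ on all of $\mathscr{J}_p(\gamma_x)$.

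Two of the three implications are then immediate. If $\sfer{r}{p}$ satisfies c.c., every tangent geodesic has a local minimum of $h$ at $t_0$; combined with $h'(t_0) = 0$ this forces $h''(t_0) = I(x, J) \ge 0$, so $I(x, \cdot)$ is positive semi-definite, which is the first assertion. Conversely, if $I(x, \cdot)$ is positive definite then $h''(t_0) > 0$ for every $w \neq 0$, so $t_0$ is a strict local minimum and $\dist{p}{\gamma(t)} > r$ for $0 < |t - t_0| < \varepsilon$, which is precisely s.c.c.

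The remaining implication, s.c.c. $\Rightarrow$ positive definiteness, is the crux, and I would argue it by contraposition: assume $I(x, \cdot)$ is not positive definite. If it is not even positive semi-definite, some $w$ gives $h''(t_0) < 0$, so the tangent geodesic in direction $w$ satisfies $\dist{p}{\gamma(t)} < r$ near $t_0$ and c.c. already fails. The delicate case is when $I(x, \cdot)$ is positive semi-definite but degenerate: choosing $w \neq 0$ in its kernel and polarising (\ref{eq:indexformpjacob}) gives $\langle D_tJ_w(r), v\rangle = 0$ for all $v \in T_x\sfer{r}{p}$, while condition (\ref{it:njacob2}) of Definition \ref{def:njacobi} differentiated along the geodesic gives $\langle D_tJ_w(r), \dot{\gamma}_x(r)\rangle = 0$; hence $D_tJ_w(r) = 0$. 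Reversing $\gamma_x$, the field $J_w$ then exhibits $p$ as a focal point of the hypersurface $N_x$ along the radial geodesic. The main obstacle lies exactly here: a strict local minimum of $h$ along each tangent geodesic does \emph{not} by itself force a positive-definite Hessian, since higher-order terms may sustain a strict minimum even when $h''(t_0) = 0$. I therefore expect the proof to hinge on showing that this degenerate (focal) direction $w$ actually produces a tangent geodesic along which $\dist{p}{\gamma(t)} \le r$ for some $t \neq t_0$, contradicting s.c.c.; the natural route is to expand $h$ past second order in the direction $w$ and exploit the focal-point degeneracy of the normal exponential map of $N_x$ at $p$ to preclude a genuine strict minimum.
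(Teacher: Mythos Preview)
The paper gives no proof here; it calls the lemma ``straightforward'' and defers to \cite[p.~247]{BC64}. Your reduction to the second derivative of $h(t)=\dist{p}{\gamma(t)}$ along a tangent geodesic, together with the identification $h''(t_0)=\mathrm{Hess}\,f(w,w)=\langle D_tJ(r),J(r)\rangle=I(x,J)$ via the $p$-Jacobi field with $J(r)=w$, is exactly the standard device, and your arguments for c.c.\ $\Rightarrow$ positive semi-definite and for positive definite $\Rightarrow$ s.c.c.\ are correct and match what one finds in Bishop--Crittenden. The isomorphism $\mathscr{J}_p(\gamma_x)\to T_x\sfer{r}{p}$, $J\mapsto J(r)$, that you invoke is likewise the right bridge between the two formulations.

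The genuine gap is the one you yourself flag: s.c.c.\ $\Rightarrow$ positive definite. You correctly observe that a strict local minimum of $h$ at $t_0$ does not by itself force $h''(t_0)>0$, and you correctly deduce that in the degenerate, semi-definite case the null Jacobi field satisfies $J(r)=w\neq 0$ and $D_tJ(r)=0$, so that $p$ is a focal point of $N_x$ along the reversed radial geodesic. But this focal information does not, on its own, rule out a higher-order (e.g.\ fourth-order) strict minimum of $h$ in the direction $w$: nothing in the Jacobi equation or the focal condition forces the next nonvanishing Taylor coefficient of $h$ to be nonpositive. Your closing sentence (``expand past second order and exploit the focal-point degeneracy \dots\ to preclude a genuine strict minimum'') is therefore a plan, not an argument---to finish, one must actually produce, from $D_tJ(r)=0$, a tangent geodesic (at $x$ or at a nearby boundary point) along which the strict inequality of s.c.c.\ fails, and your sketch does not do that. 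Since the paper supplies no argument of its own, there is nothing further to compare against; your write-up is a correct treatment of the two easy implications together with an honest localisation of where the hard one lies, but it remains incomplete on that third implication.
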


The proof is straightforward and it can be found, for example, in \cite[p.247]{BC64}.

In Lemma \ref{lem:ccindexform} we assumed that $\oball{r}{p}$ is included in a normal neighborhood of $p$. In fact, as pointed out in \cite[p.246]{BC64}, in a complete space, the local convexity of $\oball{r}{p}$ requires $\exp_p$ to be a diffeomorphism on $\oball{r}{0_p} \in T_pM$. We also have (\cite[p.246]{BC64}):
\begin{lemma} \label{lem:convlocalconv}
If $\oball{r}{p}$ is convex (strongly convex) then $\sfer{r}{p}$ satisfies c.c.. (s.c.c.). In particular, every (strongly) convex open ball is (strongly) locally convex.
\end{lemma}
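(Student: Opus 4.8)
The plan is to obtain most of the statement from Lemma \ref{lem:ccindexform} and the exponential map, and to close the one remaining endpoint case by a direct geometric argument. Throughout, a convex (or strongly convex) ball lies in a normal neighbourhood of $p$, so $\exp_p$ is a diffeomorphism on $\oball{r}{0_p}$; in particular the distance function $\dist{p}{\cdot}$ is smooth on the region we work in. For a geodesic $\gamma$ tangent to $\sfer{s}{p}$ at $x=\gamma(t_0)$ I write $f(t)=\dist{p}{\gamma(t)}$; by the Gauss lemma $\nabla f(x)=\dot\gamma_x(s)$, so tangency gives $f'(t_0)=\langle\dot\gamma(t_0),\dot\gamma_x(s)\rangle=0$, and c.c. (resp. s.c.c.) at $x$ is just the assertion $f\ge s$ (resp. $f>s$ for $0<|t-t_0|$ small) near $t_0$.

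The cleanest route for the bulk of the statement is through Lemma \ref{lem:ccindexform}. Since $\exp_p$ is a diffeomorphism on $\oball{r}{0_p}$, no radial geodesic from $p$ has a conjugate point at distance strictly less than $r$. Fix $s$ with $0<s\le r$ and $x\in\sfer{s}{p}$; for the radial geodesic $\gamma_x\colon[0,s]\to M$ the quadratic form $I(x,\cdot)$ of (\ref{eq:indexformpjacob}) is the second variation of arc length through $p$-Jacobi fields, and absence of conjugate points on $(0,s)$ makes it positive semidefinite, strictly positive definite as soon as there is none on $(0,s]$. For $s<r$ the latter is automatic, so $I(x,\cdot)$ is positive definite and Lemma \ref{lem:ccindexform} gives that $\sfer{s}{p}$ satisfies s.c.c., a fortiori c.c.; letting $s$ range over $(0,r)$ shows at once that every convex ball is (strongly) locally convex. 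If moreover $\oball{r}{p}$ is strongly convex, then, the segments to all points of $\cball{r}{p}$ being unique, $\exp_p$ is a diffeomorphism on $\cball{r}{0_p}$, there is no conjugate point on $(0,r]$, and the same argument at $s=r$ yields s.c.c. for $\sfer{r}{p}$ itself.

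It remains to treat $\sfer{r}{p}$ under mere convexity, where $\exp_p$ may degenerate on the bounding sphere of $\oball{r}{0_p}$ and $I(x,\cdot)$ is only positive semidefinite, so Lemma \ref{lem:ccindexform} no longer applies directly. Here I argue geometrically. With $f$ as above for a geodesic tangent to $\sfer{r}{p}$ at $x=\gamma(t_0)$, suppose c.c. fails, so $f$ drops below $r$ near $t_0$. If it does so on both sides of $t_0$, pick $a=\gamma(t_-),b=\gamma(t_+)\in\oball{r}{p}$ with $t_-<t_0<t_+$ close to $t_0$; for such parameters $\gamma|_{[t_-,t_+]}$ is minimizing, hence by the uniqueness clause of Definition \ref{def:convex} it is the segment $[ab]$, which nevertheless passes through $x\in\sfer{r}{p}$, so $[ab]\not\subset\oball{r}{p}$, contradicting convexity.

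The delicate and, I expect, decisive point is the one-sided degenerate tangency, in which $f$ dips below $r$ on only one side of $t_0$ (forcing $f''(t_0)=0$, i.e.\ a conjugate point of $p$ at distance exactly $r$): the two-point argument then produces interior ball points only to one side of $x$ and exhibits no offending segment. To close this case I would show that set-convexity of $\oball{r}{p}$ still rules out the penetration, by feeding the semidefinite-but-nonvanishing direction of $I(x,\cdot)$ into the geodesic variation of $\gamma_x$ underlying Lemma \ref{lem:ccindexform}: a penetrating variation yields a pair of points of $\cball{r}{p}$ whose unique connecting segment meets $\sfer{r}{p}$, returning us to the configuration of the previous paragraph. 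Making this reduction precise, so that semidefiniteness of the second variation together with the containment hypothesis forces the pointwise bound $f\ge r$, is the main obstacle; everything else reduces to the bookkeeping indicated above.
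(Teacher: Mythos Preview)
The paper does not give a proof of this lemma; it records the statement and refers to \cite[p.246]{BC64}. Your attempt therefore goes beyond what the paper itself provides, and the portion establishing the ``in particular'' clause is correct and clean: convexity forces $\exp_p$ to be a diffeomorphism on $\oball{r}{0_p}$, whence no radial geodesic from $p$ has a conjugate point at distance strictly less than $r$, the form $I(x,\cdot)$ is positive definite for every $x\in\sfer{s}{p}$ with $0<s<r$, and Lemma~\ref{lem:ccindexform} delivers s.c.c.\ (hence c.c.) for every interior sphere.

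For the boundary sphere $\sfer{r}{p}$ there is a gap beyond the one you acknowledge. In the strongly convex case you infer from uniqueness of segments to all points of $\cball{r}{p}$ that $\exp_p$ is a diffeomorphism on $\cball{r}{0_p}$. But uniqueness of minimizing segments gives only injectivity of $\exp_p$ on the closed ball, not nondegeneracy of its differential. If $\inj(p)=r$, a \emph{singular} cut point $q\in\sfer{r}{p}$ (first conjugate point along the unique minimizing segment from $p$) is compatible with unique geodesicity of $\cball{r}{p}$, yet $d(\exp_p)$ is singular at the corresponding boundary vector and $I(q,\cdot)$ is only positive semidefinite; moreover the hypothesis of Lemma~\ref{lem:ccindexform} that $\sfer{r}{p}$ lie in a normal neighbourhood of $p$ is then not met. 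Bishop's density result does not help here, since the ordinary cut points approximating $q$ may all lie strictly outside $\cball{r}{p}$. So the index-form route is incomplete for the boundary sphere in \emph{both} the convex and strongly convex cases, and the one-sided obstruction you isolate for mere convexity has an exact analogue in the strongly convex case that you did not flag. In the paper's own applications the relevant radius is always strictly below $\inj(p)$ (cf.\ Remark~\ref{rem:convinjstrict}), so this degeneracy never arises there; but the lemma as stated carries no such hypothesis, and the reference \cite{BC64} handles the boundary sphere by a direct geometric argument rather than via the index form.
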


\subsection{Convexity and Injectivity Radii} \label{ssec:coninjrad}

All of the convexity types considered in this paper carry significant upper bounds on the radii of geodesic balls, as follows:

\begin{definition} \label{def:convradii}
For each point $p \in M$ we define:
\begin{enumerate}[$(a)$]
\item \label{it:convrad} The {\it (proper) convexity radius} at $p$,
\begin{equation*}
\conv(p) = \sup \hrs{1}\{\;\rho>0 : \oball{r}{p} \;\hbox{is convex}, \; \forall \hrs{1} 0<r<\rho \};
\end{equation*}
\item \label{it:sconvrad} The {\it strong convexity radius} at $p$,
\begin{equation*}
\sconv(p) = \sup \hrs{1}\{\;\rho>0 : \oball{r}{p} \;\hbox{is strongly convex}, \;\forall \hrs{1} 0<r<\rho\};
\end{equation*}
\item \label{it:lconvrad} The {\it (proper) local convexity radius} at $p$,
\begin{equation*}
\lconv(p) = \sup \hrs{1}\{\;\rho>0 : \; \oball{\rho}{p} \;\hbox{is locally convex}\};
\end{equation*}
\item \label{it:slconvrad} The {\it strong local convexity radius} at $p$,
\begin{equation*}
\slconv(p) = \sup \hrs{1}\{\;\rho>0 : \; \oball{\rho}{p} \;\hbox{is strongly locally convex}\};
\end{equation*}
\end{enumerate}
\end{definition}
The manifold's respective radii are defined as $\conv(M) = \inf \hrs{1} \{\hrs{1} \conv(p) : p \in M \hrs{1}\}$, with the analog formulas for $\sconv(M)$, $\lconv(M)$ and $\slconv(M)$. It is shown (see for example \cite{BC64}, \cite{DoC92}, \cite{Whi32}) that all of the convexity radii in Definition \ref{def:convradii} are positive at each point of the manifold, whereas their global counterparts ($\conv(M)$, etc.) may be zero in some non-compact scenarios. Of course all radii can be infinite, as it is the case in $\real{n}$.

Recall that, in a complete manifold $(M, g)$, the injectivity radius, $\inj(p)$, at a point $p$ is defined as the radius of the largest ball centered at $p$ on which the exponential map is a diffeomorphism. Equivalently, by the Morse index theorem, it equals the distance from $p$ to its cut locus, $C_p$. The manifold's injectivity radius, $\inj(M)$, is then given as the greatest lower bound of all $\inj(p)$, $p \in M$. It is known that $\inj(\cdot)$ is continuous on $M$, $C_p$ is closed and, for $C_p \neq \emptyset$, $\inj(p) = \dist{p}{C_p}$ is always realized by some point of $C_p$ (see for example \cite{CE75}, \cite{Sak96}).

Next, we derive a necessary and sufficient condition for an open subset  $A \subset M$ to be uniquely geodesic, by which we mean that every pair of points in $A$ are joined by exactly one geodesic segment. Subsequently, we shall utilize this lemma to give a simple and, we believe, original proof of a classical inequality concerning the global convexity and injectivity radii in a complete manifold due to Berger \cite{Ber76}. The proof of the lemma relies on a known classification of points in the cut locus due to Bishop, which we briefly review next.

\begin{remark}[Bishop's Theorem on the Decomposition of the Cut Locus] \label{rem:cutlocus}
Under a complete metric on $M$, the points in $C_p$ can be categorized in two classes: {\it ordinary} cut points, connected to $p$ by two or more geodesic segments, and {\it singular} cut points $q$, connected to $p$ by exactly one minimizing geodesic segment along which $q$ is first conjugate to $p$ (note that an ordinary cut point for $p$ can also be conjugate to $p$ along some of the connecting geodesics). It is shown in \cite{Bi77} that {\it the set of ordinary cut points is dense in $C_p$}, which implies that the singular cut points are accumulation points for the ordinary ones in $C_p$.
\end{remark}

\begin{lemma} \label{lem:uniquegeodesic}
Let $A \subset M$ be an open subset of a complete manifold $(M, g)$. $A$ is uniquely geodesic if and only if it does not contain pairs of points that are cut points to each other. In particular, every open properly or strongly convex set has the above property.
\end{lemma}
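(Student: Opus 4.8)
The plan is to prove the biconditional by contraposition in each direction, treating the two classes of cut points from Bishop's decomposition (Remark \ref{rem:cutlocus}) separately, and then to obtain the ``in particular'' clause directly from Definition \ref{def:convex}. Throughout, completeness guarantees that any two points are joined by at least one geodesic segment, so the only way unique geodesy can fail is by the existence of a \emph{second} minimizing geodesic.

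For the direction asserting that the absence of mutual cut points forces $A$ to be uniquely geodesic, I would argue contrapositively. Suppose some pair $p, q \in A$ is joined by two distinct geodesic segments. The key standard fact is that a minimizing geodesic is the \emph{only} minimizer from its starting point to any interior point strictly before the cut point; hence the presence of a second minimizer forces $q$ to sit exactly at the cut point along the first, so that $q \in C_p$. Running the identical argument with the roles of $p$ and $q$ exchanged (each segment is replaced by its reverse) yields $p \in C_q$, so $p$ and $q$ are cut points to each other and both lie in $A$, contradicting the hypothesis.

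For the converse, assume $A$ is uniquely geodesic and suppose toward a contradiction that $p, q \in A$ with $q \in C_p$. If $q$ is an \emph{ordinary} cut point, it is joined to $p$ by at least two geodesic segments, and since $p, q \in A$ this immediately contradicts unique geodesy. The \emph{singular} case is the crux and is exactly where Bishop's decomposition is indispensable: here the geodesic segment from $p$ to $q$ is unique, so no failure of uniqueness is visible at $q$ itself. I would invoke the density of ordinary cut points in $C_p$ (Remark \ref{rem:cutlocus}) to choose ordinary cut points $q_k \in C_p$ with $q_k \to q$; since $A$ is open and $q \in A$, we have $q_k \in A$ for all sufficiently large $k$. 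Each such $q_k$ is joined to $p$ by at least two geodesic segments, with both endpoints in $A$, again contradicting unique geodesy. The main obstacle is precisely this singular case: a conjugate-but-unique cut point produces no direct violation of uniqueness, and it is only the approximation by ordinary cut points supplied by Bishop's theorem that forces the contradiction.

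Finally, for the ``in particular'' clause, I note that Definition \ref{def:convex} builds the uniqueness of the connecting geodesic segment into both convexity notions: for a properly convex set any two of its points are joined by a unique geodesic segment, and for a strongly convex set this uniqueness holds even for points of its closure. Hence every open properly or strongly convex set is uniquely geodesic by definition, and the biconditional just established shows that it contains no pair of points that are cut points to each other.
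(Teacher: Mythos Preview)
Your proposal is correct and follows essentially the same route as the paper: both arguments dispose of ordinary cut points immediately and handle the singular case by invoking Bishop's density theorem together with the openness of $A$ to produce ordinary cut points inside $A$. You give more detail on the direction the paper dismisses as ``trivial'' (and you make the symmetry $q\in C_p\Leftrightarrow p\in C_q$ explicit), but the substance is the same.
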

\begin{proof}
If $A$ is uniquely geodesic then, clearly, $x, y \in A$ can not be ordinary cut points. If they are singular, then by Bishop's theorem, there is a sequence $y_n \in C_{x}$, $n \in \nat{}$, of ordinary cut points of $x$, converging to $y$. But, since $A$ is open, we would get an infinity of ordinary points $y_n$ inside $S$, which is a contradiction. The converse inference is trivial. The last statement follows from the fact that all properly or strongly convex sets are uniquely geodesic.
\end{proof}

The following global result, due to Berger \cite{Ber76}, gives a (sharp) inequality between the radii, $\conv(M)$ and $\inj(M)$. Although in \cite{Ber76} only the compact case is considered, the formula is also valid for non-compact complete manifolds. We show that the result is a straightforward consequence of Bishop's theorem.

\begin{proposition}[Berger, 1976] \label{prp:convinj_2}
In a complete manifold $(M, g)$, we have
\begin{equation} \label{eq:convinj_2}
\conv(M) \leq \frac{\inj(M)}{2}.
\end{equation}
\end{proposition}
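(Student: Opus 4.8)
The plan is to argue by contradiction, reducing the global statement to the behaviour of a single, conveniently chosen geodesic ball and then invoking Lemma~\ref{lem:uniquegeodesic}. The driving observation is that a convex ball is uniquely geodesic and therefore cannot contain a pair of points that are cut points to one another; so it suffices to exhibit such a pair inside a ball that would be forced to be convex were the inequality to fail. The key design choice is to center that ball not at $p$ but at the \emph{midpoint} of a shortest geodesic running from $p$ to its nearest cut point.

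Concretely, suppose the inequality fails, so that $\conv(M) > \inj(M)/2$, and fix $\rho$ with $\inj(M)/2 < \rho < \conv(M)$. Since $\inj(M) = \inf_{p} \inj(p) < 2\rho$, I would first choose a point $p$ with $\inj(p) < 2\rho$; in particular $C_p \neq \emptyset$, and by the properties recalled in Section~\ref{sec:prelim} there is a point $q \in C_p$ realizing $\dist{p}{q} = \inj(p) < 2\rho$. Because the cut locus relation is symmetric ($q \in C_p \iff p \in C_q$), the points $p$ and $q$ are cut points to each other. Next I would take $m$ to be the midpoint of a minimizing geodesic segment from $p$ to $q$ (which exists by completeness), so that $\dist{m}{p} = \dist{m}{q} = \inj(p)/2 < \rho$, and hence both $p$ and $q$ lie in the open ball $\oball{\rho}{m}$. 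Finally, since $\rho < \conv(M) \leq \conv(m)$, the ball $\oball{\rho}{m}$ is convex, hence uniquely geodesic by Lemma~\ref{lem:uniquegeodesic}; but it contains the mutual cut pair $\{p, q\}$, which contradicts that same lemma. This contradiction yields $\conv(M) \leq \inj(M)/2$.

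The step that requires genuine care — and where Bishop's theorem does the real work — is hidden inside the appeal to Lemma~\ref{lem:uniquegeodesic}. If $q$ happens to be a singular (conjugate) cut point of $p$, then $p$ and $q$ are joined by a \emph{unique} minimizing geodesic, so the pair does not by itself violate unique geodesicity; the contradiction is recovered only because the ordinary cut points of $p$ accumulate at $q$ (Remark~\ref{rem:cutlocus}) and, the ball being open, infinitely many of them — each joined to $p$ by two segments — also lie in $\oball{\rho}{m}$. Since this is precisely the content already packaged into Lemma~\ref{lem:uniquegeodesic}, no further argument is needed at this point. I would also flag two smaller but essential choices: one must center the ball at the midpoint $m$ rather than at $p$ (a naive pointwise estimate of the form $\conv(p) \leq \inj(p)/2$ is not what drives the proof), and one must select the \emph{nearest} cut point $q$, so that both half-distances fall strictly below $\rho$ and the two points genuinely land in the open ball.
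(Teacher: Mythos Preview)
Your argument is correct and follows essentially the same route as the paper: assume the inequality fails, pick a point whose injectivity radius is close to the infimum, take the midpoint of a segment to one of its nearest cut points, and observe that the resulting convex ball contains a mutual cut pair, contradicting Lemma~\ref{lem:uniquegeodesic}. The only differences are cosmetic (your auxiliary $\rho$ and the relabelling $p\leftrightarrow x$, $m\leftrightarrow p$), and your handling of the case distinction realized/non-realized infimum via the single inequality $\inj(p)<2\rho$ is in fact slightly cleaner than the paper's.
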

\begin{proof}
If $\conv(M) = \infty$ then $\inj(M) = \infty$, since it is known that $\conv(p) \leq \inj(p)$ for all $p \in M$, and so $(\ref{eq:convinj_2})$ is satisfied trivially. Otherwise, assume $\conv(M) > \inj(M)/2$. If $\inj(M) = \inj(x)$ for some $x \in M$, then we continue the analysis using $\oball{\inj(x)}{x}$. If $\inj(M)$ is not realized, which could be possible if $M$ is not compact, then we can find $x \in M$ such that $\inj(x)/2 < \conv(M)$. Either way, choose $p$ as the midpoint of a geodesic segment connecting $x$ to one of its cut points $y$ on $\sfer{\inj(x)}{x}$. It follows that $x, y \in \oball{\conv(p)}{p}$, which is convex hence, $x, y$ cannot be cut points to each other, hence a contradiction.
\end{proof}

The next lemma is known in the local convex case {\it (\ref{it:localtoconvex1})}. In {\it (\ref{it:localtoconvex2})} we address the strong local convexity scenario, which is slightly different from the former and  which we could not find a reference for.

\begin{lemma} \label{lem:localtoconvex}
Let $(M, g)$ be complete, of dimension $n \geq 2$, $p \in M$ and $r > 0$. Then,
\begin{enumerate}[{\it (i)}]
\item \label{it:localtoconvex1} If $\oball{r}{p}$ is locally convex, then every segment with endpoints in $\oball{r/2}{p}$ is entirely in $\oball{r/2}{p}$;
\item  \label{it:localtoconvex2} If $\oball{r}{p}$ is strongly locally convex then, for all $0<r'<r/2$, every segment with endpoints in $\cball{r'}{p}$ has its interior contained in $\oball{r'}{p}$.
\end{enumerate}
\end{lemma}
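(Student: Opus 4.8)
The plan is to reduce both statements to the behaviour of the single function $f(t)=\dist{p}{\sigma(t)}$ along an arc-length parametrized segment $\sigma:[0,L]\to M$ joining the two prescribed endpoints, and to show its maximum cannot be too large. Two preliminary facts set up everything. First, since $\oball{r}{p}$ is (strongly) locally convex — and s.c.c. implies c.c., so (ii) falls under local convexity as well — in a complete manifold this forces $\exp_p$ to be a diffeomorphism on $\oball{r}{0_p}$, as recorded after Lemma \ref{lem:ccindexform}; hence $f$ is smooth on $\oball{r}{p}\setminus\{p\}$, each $\sfer{s}{p}$ with $0<s<r$ is a smooth hypersurface, and $\nabla f$ is its outward unit normal. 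Consequently, at any interior local maximum $t^\ast$ of $f$ with $0<f(t^\ast)<r$ one has $f'(t^\ast)=\langle\nabla f,\dot\sigma(t^\ast)\rangle=0$, i.e.\ $\sigma$ is tangent to $\sfer{f(t^\ast)}{p}$ at $\sigma(t^\ast)$, so the convexity condition becomes available at radius $f(t^\ast)$.

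Next I would bound $m=\max_{[0,L]}f$. As $\sigma$ is minimizing its sub-arcs are minimizing too, so at the point $t^\ast$ realizing $m$ we have $\dist{\sigma(0)}{\sigma(t^\ast)}=t^\ast$ and $\dist{\sigma(t^\ast)}{\sigma(L)}=L-t^\ast$; two triangle inequalities give $2m\le f(0)+f(L)+L$, together with $L\le f(0)+f(L)$. For (i), $f(0),f(L)<r/2$ yields $L<r$ and $m<r$; for (ii), $f(0),f(L)\le r'<r/2$ yields $m\le 2r'<r$. This is the whole point of the factor $2$: it guarantees $m<r$, so the entire segment lies in $\oball{r}{p}$ and the convexity condition on $\sfer{m}{p}$ is legitimately at our disposal.

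With this in hand, (ii) is short. If some interior $t_1\in(0,L)$ had $f(t_1)\ge r'$, then $m\ge r'>0$ and $m$ is attained at an interior point $t^\ast$ (when $m>r'$ the endpoints satisfy $f\le r'<m$ and cannot realize it; when $m=r'$ take $t^\ast=t_1$). Tangency at $t^\ast$ together with s.c.c.\ at radius $m$ forces $f(t)>m$ for $0<|t-t^\ast|<\varepsilon$, contradicting $f\le m$; hence $(xy)\subset\oball{r'}{p}$.

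For (i) the one genuine obstacle appears: c.c.\ is non-strict, yielding only $f\ge m$ near a tangency point, so it does not by itself exclude an interior maximum. I would circumvent this by examining the \emph{first} point attaining the maximum. Assume the segment leaves $\oball{r/2}{p}$, so $m\ge r/2$, and set $a=\min\{\,t\in[0,L]:f(t)=m\,\}$; since $f(0),f(L)<r/2\le m$ we get $a\in(0,L)$ and $f<m$ on $[0,a)$. Tangency at $\sigma(a)$ and c.c.\ at radius $m$ (valid as $0<m<r$) supply $\varepsilon>0$ with $f\ge m$ on $(a-\varepsilon,a+\varepsilon)$, clashing with $f<m$ on $(a-\varepsilon,a)$. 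This contradiction gives $m<r/2$, i.e.\ $[xy]\subset\oball{r/2}{p}$. The main thing to get right is precisely this degenerate non-strict case; the rest is the distance-function/tangency bookkeeping above.
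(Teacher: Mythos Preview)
Your argument is correct. For part {\it(\ref{it:localtoconvex2})} it is essentially the paper's proof: the same triangle-inequality estimate confines the segment to $\oball{r}{p}$, and then s.c.c.\ at the radius of an interior maximum of $f$ yields the contradiction (the paper splits this last step into ``no interior maximum'' and ``no arc on $\sfer{\rho}{p}$'', which you handle in one stroke). The paper does not actually prove part {\it(\ref{it:localtoconvex1})}, deferring instead to \cite[p.~246]{BC64}; your first-time-the-maximum-is-attained device is a clean and correct way to get past the non-strict nature of c.c., so you have in fact supplied more than the paper does here.
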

\begin{proof}
We only show {\it (\ref{it:localtoconvex2})}; for a proof of {\it (\ref{it:localtoconvex1})} see, for example, \cite[p.246]{BC64}. Let $0<r'<r/2$ and let $\gamma : [0, 1] \rightarrow M$ be a geodesic segment with $x=\gamma(0) \in \cball{r'}{p}$ and $y=\gamma(1) \in \cball{r'}{p}$. We have $\dist{x}{y} \leq \dist{p}{x} + \dist{p}{y} \leq 2r' < r$. Assuming that there is $z = \gamma(t_0)$ such that $\dist{p}{z} \geq r$, by the triangle inequality we have $\dist{x}{z} \geq \dist{p}{z} - \dist{p}{x} \geq r-r' > r - r/2 = r/2$ and, similarly, $\dist{y}{z} > r/2$, hence $\dist{x}{y} = \dist{x}{z} + \dist{y}{z} > r$, which is absurd. It follows that $[xy]$ is entirely contained in $\oball{r}{p}$. Put $\rho = \max\{ \dist{p}{x}, \dist{p}{y}\}$ so, $\rho \leq r'$. We notice that $\dist{p}{\gamma(\cdot)}$ cannot have any maximum values, for if  $w = \gamma(t_1)$, $0<t_1<1$ were such maximum, then $\gamma$ would be tangent at $w$ to $\sfer{\dist{p}{w}}{p}$, and, for a small enough neighborhood of $t_1$, $\gamma$ would be contained in $\oball{\dist{p}{w}}{p}$, i.e. $\sfer{\dist{p}{w}}{p}$ would not satisfy s.c.c.. Since $\sfer{\dist{p}{w}}{p} \subset \oball{r}{p}$, this would be in contradiction with the strong local convexity of $\oball{r}{p}$. It follows, then, that $\gamma$ does not leave $\cball{\rho}{p}$. Finally, the possibility of a portion of $(xy)$ lying on $\partial \oball{\rho}{p}$ is again contrary to the strong local convexity of $\oball{r}{p}$, since in that case $\sfer{\rho}{p}$ would satisfy c.c. but not s.c.c.. Therefore, we must have $(xy) \subset \oball{\rho}{p} \subseteq \oball{r'}{p}$, which completes the proof.
\end{proof}

The following relations comparing the radii corresponding to different types of convexity, will be of assistance further in the sequel. Some of the formulas are immediate and all are known (or essentially known) to the community, in one form or another.

\begin{proposition} \label{prp:convrad} Let $(M, g)$ be a complete Riemannian manifold.
\begin{enumerate}[(i)]
\item \label{it:convrad1} For all points $p \in M$, we have:
\begin{equation} \label{eq:convrad11}
\sconv(p) \leq \conv(p), \hrs{5} \slconv(p) \leq \lconv(p);
\end{equation}
\begin{equation}  \label{eq:convrad12}
\conv(p) \leq \lconv(p), \hrs{5} \sconv(p) \leq \slconv(p);
\end{equation}
\begin{equation}  \label{eq:convrad13}
\lconv(p) \leq \inj(p);
\end{equation}

\item \label{it:convrad2} In addition to the global counterparts of (\ref{eq:convrad11}), (\ref{eq:convrad12}) and (\ref{eq:convrad13}), the following relations also hold:
\vrs{3}
\begin{equation} \label{eq:convrad22}
\frac{\lconv(M)}{2} \leq \conv(M);
\end{equation}
\begin{equation} \label{eq:convrad23}
\frac{\slconv(M)}{2} \leq \sconv(M).
\end{equation}
\end{enumerate}
\end{proposition}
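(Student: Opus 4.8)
Here is how I would organize the argument; I would assume throughout that $(M,g)$ is complete and use the preliminary lemmas freely.

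My plan is to treat the three pointwise comparisons in part (\ref{it:convrad1}) as direct consequences of the definitions together with Lemmas \ref{lem:convlocalconv} and \ref{lem:ccindexform}, and then to reduce the two genuinely new global estimates (\ref{eq:convrad22}) and (\ref{eq:convrad23}) of part (\ref{it:convrad2}) to Lemma \ref{lem:localtoconvex}. For (\ref{eq:convrad11}) I would observe that a strongly convex ball is convex (the unique-segment requirement is identical, and for $x,y\in\oball{r}{p}$, which also lie in $\close{\oball{r}{p}}$, strong convexity gives $(xy)\subset\oball{r}{p}$, hence $[xy]=\{x\}\cup(xy)\cup\{y\}\subset\oball{r}{p}$), and that s.c.c. trivially implies c.c.; consequently the families of radii defining $\sconv(p)$ and $\slconv(p)$ are contained in those defining $\conv(p)$ and $\lconv(p)$, and taking suprema yields both inequalities. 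For (\ref{eq:convrad12}) I would apply Lemma \ref{lem:convlocalconv}: for each $\rho<\conv(p)$ the convex ball $\oball{\rho}{p}$ is locally convex, so $\rho\le\lconv(p)$, and letting $\rho\uparrow\conv(p)$ gives $\conv(p)\le\lconv(p)$; the same argument with ``strong'' throughout gives $\sconv(p)\le\slconv(p)$. For (\ref{eq:convrad13}) I would use the remark following Lemma \ref{lem:ccindexform} that local convexity of $\oball{\rho}{p}$ forces $\exp_p$ to be a diffeomorphism on $\oball{\rho}{0_p}$, whence $\rho\le\inj(p)$ and so $\lconv(p)\le\inj(p)$. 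All the global counterparts then follow at once, since $f\le g$ pointwise implies $\inf_M f\le\inf_M g$.

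For (\ref{eq:convrad22}) I would fix $\rho<\lconv(M)$, so that for every $q\in M$ the ball $\oball{\rho}{q}$ is locally convex (local convexity being inherited by concentric smaller balls), and prove that $\oball{\rho/2}{q}$ is convex. Containment of segments is supplied by Lemma \ref{lem:localtoconvex}~{\it (\ref{it:localtoconvex1})}, and existence of a connecting segment is Hopf--Rinow. The crux is uniqueness: for $x,y\in\oball{\rho/2}{q}$ one has $\dist{x}{y}<\rho$, while (\ref{eq:convrad13}) gives $\dist{x}{y}<\rho<\lconv(M)\le\lconv(x)\le\inj(x)=\dist{x}{C_x}$, so $y\notin C_x$ and the minimizing geodesic from $x$ to $y$ is unique. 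Hence $\oball{\rho/2}{q}$ is convex, $\conv(q)\ge\rho/2$ for all $q$, and letting $\rho\uparrow\lconv(M)$ yields $\conv(M)\ge\lconv(M)/2$.

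For (\ref{eq:convrad23}) I would run the same scheme with Lemma \ref{lem:localtoconvex}~{\it (\ref{it:localtoconvex2})}, and here I expect the one real subtlety. That lemma carries the strict bound $r'<r/2$, so with $r=\rho$ it does not directly control points lying on $\sfer{\rho/2}{q}$; I would therefore prove strong convexity of $\oball{s}{q}$ for every $0<s<\rho/2$ rather than at the radius $\rho/2$ itself, which still yields $\sconv(q)\ge\rho/2$. For such $s$, any $x,y\in\close{\oball{s}{q}}\subset\cball{s}{q}$ satisfy $\dist{x}{y}\le 2s<\rho$; Lemma \ref{lem:localtoconvex}~{\it (\ref{it:localtoconvex2})} places $(xy)$ inside $\oball{s}{q}$, and uniqueness follows exactly as before from $\dist{x}{y}<\rho<\slconv(M)\le\slconv(x)\le\lconv(x)\le\inj(x)$, using (\ref{eq:convrad11}) and (\ref{eq:convrad13}). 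Thus $\oball{s}{q}$ is strongly convex for all $0<s<\rho/2$, giving $\sconv(q)\ge\rho/2$ and, after taking the infimum over $q$ and then $\rho\uparrow\slconv(M)$, the bound $\sconv(M)\ge\slconv(M)/2$. The main obstacle throughout part (\ref{it:convrad2}) is precisely the uniqueness of segments: Lemma \ref{lem:localtoconvex} only guarantees that segments stay inside the balls, so the work lies in combining the injectivity-radius bound (\ref{eq:convrad13}) with the diameter estimate to exclude mutual cut points, and the strict inequality in Lemma \ref{lem:localtoconvex}~{\it (\ref{it:localtoconvex2})} is exactly why the strong case must be handled through radii $s<\rho/2$.
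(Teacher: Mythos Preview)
Your proposal is correct and follows essentially the same route as the paper: part (i) via the definitions and Lemma~\ref{lem:convlocalconv}, and part (ii) by combining Lemma~\ref{lem:localtoconvex} for segment containment with the bound $\lconv \le \inj$ to secure uniqueness of the connecting segments. The only cosmetic difference is that the paper argues (\ref{eq:convrad22}) and (\ref{eq:convrad23}) by contradiction (assuming $\conv(M)<\lconv(M)/2$ and exhibiting convex balls of radius exceeding $\conv(p)$), whereas you proceed directly; your explicit handling of the strict inequality $r'<r/2$ in Lemma~\ref{lem:localtoconvex}{\it (\ref{it:localtoconvex2})} by working with radii $s<\rho/2$ is exactly what the paper's phrase ``follows similarly'' is implicitly relying on.
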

\begin{proof}
{\it (\ref{it:convrad1})} Relations (\ref{eq:convrad11}) and (\ref{eq:convrad12}) follow immediately from Definition \ref{def:convradii} and Lemma \ref{lem:convlocalconv}, respectively and for a proof of (\ref{eq:convrad13}), see for example \cite[p.246]{BC64}.

{\it (\ref{it:convrad2})} The global equivalents of (\ref{eq:convrad11}), (\ref{eq:convrad12}) and (\ref{eq:convrad13}) are immediate. To prove (\ref{eq:convrad22}) and assuming the contrary, we can find $p \in M$ and a real $r_0$, such that $0<r_0<\lconv(M) - 2\conv(p)$. By (\ref{eq:convrad13}) we have $\lconv(M) \leq \inj(M)$, which implies that, for all $x \in \cball{\conv(p) + r_0/2}{p}$ we have $\oball{\conv(p) + r_0/2}{p} \subset \oball{\inj(M)}{x}$, i.e. $\oball{\conv(p) + r_0/2}{p}$ is totally normal. This means that any two points in $\oball{\conv(p) + r_0/2}{p}$, are joined by exactly one geodesic segment. On the other hand, all balls centered at $p$ of radii not greater than $\lconv(p)/2$ satisfy the conclusion of Lemma \ref{lem:localtoconvex}{\it (\ref{it:localtoconvex1})} and, since $\conv(p) + r_0/2 < \lconv(M)/2 \leq \lconv(p)/2$, it follows that so do all balls of radii less than $\conv(p) + r_0/2$. But this means that for all $0<r<r_0$, the open balls $\oball{\conv(p) + r/2}{p}$ are convex, which is in contradiction with the definition of $\conv(p)$. The proof of  (\ref{eq:convrad23}) follows similarly from Lemma \ref{lem:localtoconvex}{\it (\ref{it:localtoconvex2})}.
\end{proof}

\begin{corollary} \label{cor:inftyconvradii}
In a complete, connected manifold $(M, g)$ of dimension $n \geq 2$, we have:
\begin{enumerate}[{\it (i)}]
\item \label{it:inftyconvradii1} $\lconv(M) = \infty \Leftrightarrow \conv(M) = \infty$;
\item \label{it:inftyconvradii2} $\slconv(M) = \infty \Leftrightarrow \sconv(M) = \infty$.
\end{enumerate}
\end{corollary}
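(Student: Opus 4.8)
The plan is to read off both equivalences directly from the global inequalities assembled in Proposition \ref{prp:convrad}; each asserted equivalence amounts to sandwiching one radius between constant multiples of the other, so that finiteness (or infinitude) of either forces the same for both. No genuinely new argument is needed here — this is purely a matter of chaining the bounds already established.

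First I would dispatch (\ref{it:inftyconvradii1}). For the implication $\lconv(M) = \infty \Rightarrow \conv(M) = \infty$, I would invoke (\ref{eq:convrad22}), namely $\lconv(M)/2 \leq \conv(M)$: if the left-hand side is infinite, so is the right. For the converse $\conv(M) = \infty \Rightarrow \lconv(M) = \infty$, I would use the global counterpart of the first inequality in (\ref{eq:convrad12}), that is $\conv(M) \leq \lconv(M)$, whose validity is asserted at the start of part (\ref{it:convrad2}) of the Proposition. Together these two bounds trap $\conv(M)$ and $\lconv(M)$ between constant multiples of one another, so that one is infinite precisely when the other is.

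The argument for (\ref{it:inftyconvradii2}) is entirely parallel. The forward implication $\slconv(M) = \infty \Rightarrow \sconv(M) = \infty$ follows from (\ref{eq:convrad23}), i.e. $\slconv(M)/2 \leq \sconv(M)$, while the reverse implication $\sconv(M) = \infty \Rightarrow \slconv(M) = \infty$ follows from the global version of the second inequality in (\ref{eq:convrad12}), namely $\sconv(M) \leq \slconv(M)$.

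Since every step is a direct citation of an inequality already proved, there is really no obstacle to overcome. The only point requiring a moment's care is to confirm that the factor of $1/2$ appearing in (\ref{eq:convrad22}) and (\ref{eq:convrad23}) is harmless for the question at hand; and indeed it is, because multiplying an infinite quantity by a fixed positive constant leaves it infinite, so these weaker ``halved'' bounds are still strong enough to propagate infinitude in the forward direction.
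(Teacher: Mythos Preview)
Your proposal is correct and follows the same approach as the paper: both simply chain the global inequalities \eqref{eq:convrad12}, \eqref{eq:convrad22}, and \eqref{eq:convrad23} from Proposition~\ref{prp:convrad} to conclude that each pair of radii is finite or infinite simultaneously. The paper's own proof is a single sentence citing exactly these three inequalities.
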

\begin{proof}
Both points are direct consequences of (\ref{eq:convrad12}), (\ref{eq:convrad22}) and (\ref{eq:convrad23}).
\end{proof}

\section{Equivalence of Convexity Types} \label{sec:convequiv}

Let $M$ be a set and $\mathscr{P}(M)$ the collection of all subsets of $M$. Let $\mathcal{R}_2(M)$ be the collection of equivalence relations on $\mathscr{P}(M)$ having exactly two classes of equivalence.

\begin{definition} \label{def:prop}
A {\it property of subsets of} $M$ (or just a {\it property on $M$}) is a relation $\rho \in \mathcal{R}_2(M)$.
\end{definition}

\noindent {\bf Convention:} If $\rho \in \mathcal{R}_2(M)$ is a property on $M$, we denote by $\widehat{0}_\rho$ and $\widehat{1}_\rho$ its two equivalence classes. Once the notation is in place, we refer to an element of $\widehat{1}_\rho$ as an element {\it having property $\rho$}, with the obvious consequence on the naming of the elements of $\widehat{0}_\rho$, as {\it not} having property $\rho$.

\begin{remark}
In practice, the above convention does not impose any rules on which of the two equivalence classes should be denoted by $\widehat{0}_\rho$ or $\widehat{1}_\rho$, offering no guarantees that the members of $\widehat{1}_\rho$ do indeed have the corresponding property. For a meaningful usage throughout the paper, we shall assign the notations so that they make sense in the context being analyzed.
\end{remark}

\begin{definition} \label{def:propequiv}
Two properties, $\rho_1, \rho_2 \in \mathcal{R}_2(M)$, are said to be {\it equivalent} with respect to a collection of subsets $\mathcal{A} \subseteq \mathscr{P}(M)$, written $\rho_1 \sim_{\mathcal{A}} \rho_2$, if $A \in \widehat{1}_{\rho_1} \Leftrightarrow A \in \widehat{1}_{\rho_2}$, for all $A \in \mathcal{A}$. 
\end{definition}

Clearly, $\sim_{\mathcal{A}}$ is an equivalence relation on $\mathcal{R}_2(M)$. Next, let us consider $(M, g)$ a complete Riemannian manifold of dimension $n \geq 2$. We define the following equivalence relations on $\mathscr{P}(M)$ by giving explicitly their equivalence classes. In fact, since for each relation there will be two classes in total, we will indicate only one of them, the other being the complementary of the former with respect to $\mathscr{P}(M)$:

\renewcommand{\labelitemi}{$\diamond$}

\begin{itemize}
\item $\ctype{P} \in \mathcal{R}_2(M)$, with $\widehat{1}_\ctype{P} = \{ A \subseteq M : A \hbox{ is properly convex} \}$;
\item $\ctype{S} \in \mathcal{R}_2(M)$, with $\widehat{1}_\ctype{S} = \{ A \subseteq M : A \hbox{ is strongly convex}\}$;
\item $\ctype{L} \in \mathcal{R}_2(M)$, with $\widehat{1}_\ctype{L} = \{ A \subseteq M : A \hbox{ is locally convex}\}$; 
\item $\ctype{SL} \in \mathcal{R}_2(M)$, with $\widehat{1}_\ctype{SL} = \{ A \subseteq M : A \hbox{ is strongly locally convex} \}$.
\end{itemize}

Evidently, all four relations defined above are properties on $M$ in the sense of Definition \ref{def:prop}. Let $\mathcal{C} = \{\ctype{P}, \ctype{S}, \ctype{L}, \ctype{SL} \} \subset \mathcal{R}_2(M)$ be the set of the four properties defined above.

\begin{definition} \label{def:convequiv}
An element of $\mathcal{C}$ is referred to as a {\it convexity type}. We say that two convexity types are {\it equivalent} ({\it coincide}, or are {\it indistinguishable}) with respect to a given family of subsets of $M$, if they are equivalent as properties, in the sense of Definition \ref{def:propequiv}.
\end{definition}

The rest of the paper is dedicated to the study of scenarios where some of the convexity types in $\mathcal{C}$ are equivalent. Specifically, the first such case will assume the coincidence of $\ctype{P}$, $\ctype{S}$ and $\ctype{SL}$, while the second one will weaken the condition by imposing only the equivalence of $\ctype{P}$ and $\ctype{S}$. All equivalences are considered over families of geodesic balls in $M$. On that note, before ending the section, let us introduce the following notations, which will be used in the sequel:

\renewcommand{\labelitemi}{}
\begin{itemize}
\item $\foball{p} = \{ \oball{r}{p} :  0< r \leq \inj(p) \}$; \vrs{5}
\item $\fcball{p} = \{ \cball{r}{p} : 0< r < \inj(p) \}$; \vrs{5}
\item $\displaystyle \foball{M} = \bigcup_{p \in M} \foball{p}$, \hrs{2} $\displaystyle \fcball{M} = \bigcup_{p \in M} \fcball{p}$.
\end{itemize}

\section{The \ctype{P}\ctype{S}\ctype{L} Equivalence}  \label{sec:main1}

\begin{definition} \label{def:pslequiv}
We say that $p \in M$ {\it satisfies condition} \cond{1} if the \ctype{P}, \ctype{S} and \ctype{SL} convexity types coincide on $\foball{p}$: 
\begin{equation*}
\ctype{P} \sim_{\foball{p}} \ctype{S} \sim_{\foball{p}} \ctype{SL}.
\end{equation*}
The metric $g$ is said to be a {\it \cond{1}-metric}, if every point of $M$ satisfies \cond{1}. A {\it \cond{1}-manifold} is a manifold endowed with a \cond{1}-metric. Equivalently, a \cond{1}-manifold satisfies $\ctype{P} \sim_{\foball{M}} \ctype{S} \sim_{\foball{M}} \ctype{SL}$.
\end{definition}

We require a few more preparatory facts. Let $p \in M$ and $0<r<\inj(p)$. The following lemma is an easy consequence of the classical theorem on the existence and uniqueness of Jacobi fields along a curve:

\begin{lemma} \label{lem:jacobiiso}
Let $p$, $r$ be as above and $v \in \sfer{r}{0_p}$. For $0< s <r$, let $\gamma_s : [0, s] \rightarrow M$ denote the normal geodesic segment given by $\gamma_s(0)=p$ and $\displaystyle \dot{\gamma}_s(0) = \frac{v}{r}$. Then, $\bot v$ and $\mathscr{J}_p(\gamma_s)$ are isomorphic.
\end{lemma}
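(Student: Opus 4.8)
The plan is to exhibit an explicit linear isomorphism $\Phi : \mathscr{J}_p(\gamma_s) \to \bot v$ given by $\Phi(J) = D_tJ(0)$, the covariant derivative of $J$ at the starting point $p$. The whole argument rests on the classical fact invoked in the statement: a Jacobi field along a geodesic is uniquely determined by its value and its covariant derivative at a single point, so that the space of all Jacobi fields along $\gamma_s$ is isomorphic, via $J \mapsto (J(0), D_tJ(0))$, to $T_pM \times T_pM$.

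First I would check that $\Phi$ is well defined, i.e. that its image lands in $\bot v$. For $J \in \mathscr{J}_p(\gamma_s)$ the two defining conditions of a $p$-Jacobi field are $J(0) = 0$ and $\langle D_tJ(0), \dot{\gamma}_s(0)\rangle = 0$. Since $\dot{\gamma}_s(0) = v/r$ is a nonzero multiple of $v$, the second condition says precisely that $D_tJ(0) \perp v$, that is, $D_tJ(0) \in \bot v$. Linearity of $\Phi$ is immediate, as both $D_t(\cdot)$ and evaluation at $0$ are linear.

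Next I would establish bijectivity. For injectivity: if $\Phi(J) = D_tJ(0) = 0$ then, together with $J(0) = 0$, the uniqueness half of the Jacobi-field theorem forces $J \equiv 0$. For surjectivity: given any $w \in \bot v$, the existence half produces a (unique) Jacobi field $J$ along $\gamma_s$ with $J(0) = 0$ and $D_tJ(0) = w$; this $J$ automatically satisfies both defining conditions of a $p$-Jacobi field — the first by construction, and the second because $\langle w, \dot{\gamma}_s(0)\rangle = \langle w, v/r\rangle = 0$ by the choice $w \in \bot v$ — so $J \in \mathscr{J}_p(\gamma_s)$ and $\Phi(J) = w$. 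Hence $\Phi$ is a linear bijection and $\mathscr{J}_p(\gamma_s) \cong \bot v$.

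I do not expect any genuine obstacle here: the only points needing a line of verification are that the image of $\Phi$ lies in $\bot v$ (which is exactly the orthogonality condition built into the definition of a $p$-Jacobi field) and that the Jacobi field reconstructed from a vector $w \in \bot v$ is in fact a $p$-Jacobi field. One could alternatively bypass the explicit surjectivity argument and merely observe that $\Phi$ is an injective linear map between two spaces of equal dimension $n-1$ — the dimension of $\mathscr{J}_p(\gamma_s)$ having already been recorded above — but producing the inverse makes the isomorphism concrete at no extra cost.
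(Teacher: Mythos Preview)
Your proof is correct and follows essentially the same approach as the paper: both arguments exhibit the isomorphism induced by the existence--uniqueness theorem for Jacobi fields via the initial data $(J(0),D_tJ(0))$. The only cosmetic difference is direction---the paper defines the map $H^s_v:\bot v\to\mathscr{J}_p(\gamma_s)$, $\widetilde w\mapsto$ (unique Jacobi field with $J(0)=0$, $D_tJ(0)=\widetilde w$), whereas your $\Phi$ is precisely its inverse; the verifications of well-definedness, linearity, and bijectivity are the same in content.
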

\begin{proof}
Let $0<s <r$ and $\gamma_s$ be as given. The theorem of existence and uniqueness of Jacobi fields states that for any $ \widetilde{w} \in T_pM$ there is a unique Jacobi field $J \in \mathscr{J}(\gamma_s)$ satisfying $J(0) = 0_p$ and $D_tJ(0) =  \widetilde{w}$. Define $H^s_{v} : \bot v \rightarrow \mathscr{J}(\gamma_s)$, where, for all $ \widetilde{w} \in \bot v$, $H^s_{v}(\widetilde{w})$ is the unique Jacobi field corresponding to $ w$, as above. Thus, $H^s_{ v}$ is a bijection onto its image and the fact that $H^s_{ v}(\bot v) = \mathscr{J}_p(\gamma_s)$ follows immediately from the existence theorem. Linearity is a direct consequence of the linearity of the covariant derivative $D_t$.
\end{proof}

\begin{remark}
Choose normal coordinates at $p$ on $\cball{r}{p}$, and denote by $(\partial_i)$ the local coordinate frame. Using the known expression of a Jacobi field in normal coordinates (see for example \cite[p.178]{Lee97}), for all $t \in [0, s]$ we have:
\begin{equation} \label{eq:jacobiiso1}
H^s_{v}(\widetilde{w})(t) = t \widetilde{w}^i \partial_i  \big |_{\gamma_s(t)} \hrs{1},
\end{equation}
where $ \widetilde{w} =  \widetilde{w}^i \partial_i \big |_p$. Differentiating along $\gamma_s$ in ($\ref{eq:jacobiiso1}$), we get
\begin{equation} \label{eq:jacobiiso2}
D_t H^s_{v}(\widetilde{w})(t) =  \widetilde{w}^i \partial_i \big |_{\gamma_s(t)} + t \widetilde{w}^i D_t \partial_i \big |_{\gamma_s(t)}.
\end{equation}
We will use the coordinate expressions ($\ref{eq:jacobiiso1}$) and ($\ref{eq:jacobiiso2}$) in what follows.
\end{remark}

\noindent {\bf Spherical Coordinates.} The proof we chose for the next lemma, as well as for the main theorem, makes use of spherical coordinates on $T_pM$. To ensure the consistency of our presentation, we describe this setting in detail. Let $(e_i)$ be an orthonormal basis in $T_pM$ and denote by $E : T_pM \rightarrow \real{n}$ the natural isomorphism, $E( v^ie_i) = ( v^1,\; ... \;, v^{\hrs{1}n})$. Denote by $(\widetilde{\partial}_i)$ the global coordinate frame on $T_pM$ defined by the coordinate chart $(T_pM, E)$. If, for a tangent vector $ v \in T_pM$, we let $\iota_{ v} : T_pM \rightarrow T_{ v}T_pM$ be the natural identification, it is clear that $\iota_{ v}(e_i) = \widetilde{\partial}_i \big |_{ v}$, $\forall 1 \leq i \leq n$, hence, for all $ \widetilde{w} = \widetilde{ w}^{\hrs{1}i} e_i \in T_pM$ we have
\begin{equation} \label{eq:identcoords}
\iota_{ v}(\widetilde{w}) = \widetilde{ w}^{\hrs{1}i} \hrs{2} \widetilde{\partial}_i \big |_{ v}.
\end{equation}
Also, observe that $\iota_{ v}(\bot  v) = T_{ v}\hrs{1}\sfer{r}{0_p}$, for all $ v \in T_pM$, $\abs{v} = r$.

Consider spherical coordinates charts $\big\{(U_a,  \varphi_a)\big\}_a$ on $T_pM$, centered at $0_p$; here we have $U_a \subset T_pM$ with $\displaystyle \bigcup_a U_a = T_pM - 0_p$ and $ \varphi_a : U_a \rightarrow \real{n}$, $ \varphi_a( v) = (\theta^1( v),\; ... \;, \theta^{\hrs{1}n}( v))$ such that $0 < \theta^1( v) < 2\pi$, $0 < \theta^{\hrs{1}i}( v) < \pi$, $\forall \hrs{1} 2\leq i \leq n-1$ (if $n \geq 3$) and $\theta^{\hrs{1}n}( v) = \abs{ v}$. For every chart $(U_a,  \varphi_a)$, denote by $(\widetilde{\partial}^{\hrs{1}a}_i)$ the corresponding local coordinate frame on $U_a$. Then, for all $1\leq i\leq n$ and $ v \in U_a$, we have
\begin{equation} \label{eq:spherebasis}
\widetilde{\partial}_i^{\hrs{1}a} \big |_{ v} = \frac{\partial  v^j}{\partial \theta^{\hrs{1}i}} \Bigg |_{\hrs{1} \varphi_a( v)} \widetilde{\partial}_j \big |_{ v},
\end{equation}
where $( v^1,\; ... \;,  v^{\hrs{1}n}) = (E \circ  \varphi_a^{-1})(\theta^1,\; ... \;,\theta^{\hrs{1}n})$, is the associated transition map on $T_pM \cap U_a$, hence each component $v^{\hrs{1}i}$ is a smooth function\footnote{\hrs{2}The spherical coordinate transformations could, of course, be given explicitly. However, this is not required for the purposes set in this paper.} of $(\theta^1,\; ... \;,\theta^{\hrs{1}n})$.

Let $O_a = U_a \cap \sfer{r}{0_p}$ and define $\phi_a : O_a \rightarrow \real{n-1}$ given as $\phi_a( v) = (\theta^1( v), \;... \;, \theta^{\hrs{1}n-1}( v))$. Clearly, $\big \{(O_a, \phi_a) \big\}_a$ is a smooth atlas on $\sfer{r}{0_p}$ and $(\partial_1^{\hrs{1}a} (= \widetilde{\partial}_1^{\hrs{1}a}),\; ... \;, \partial_{n-1}^{\hrs{1}a} (=  \widetilde{\partial}_{n-1}^{\hrs{1}a}))$ is a smooth frame on $O_a$. The above atlas induces a natural smooth structure, $\big\{(V_a, \psi_a)\big\}_a$, on $T\sfer{r}{0_p}$, the sphere's tangent bundle, where $V_a = \{ ( v,  w) :  v \in O_a,  w \in T_{ v}\sfer{r}{0_p}  \}$ and $\psi_a( v,  w) = (\phi_a( v),  w^1,\; ... \;,  w^{\hrs{1}n-1})$, $\displaystyle  w = \sum_{i=1}^{n-1}  w^i \partial_i^{\hrs{1}a} \big |_{ v} \in T_{ v}\sfer{r}{0_p}$. From the fact that $\iota_{ v}(e_i) = \widetilde{\partial}_i \big |_{ v}$ and from (\ref{eq:spherebasis}), if $ w =  w^i \partial_i^{\hrs{1}a} \big |_{ v}$ (with summation from $1$ to $n-1$), we have
\begin{equation*}
\iota^{-1}_{ v}( w) =  w^i \iota^{-1}_{ v}(\partial_i^{\hrs{1}a} \big |_{ v}) =  w^i \frac{\partial  v^j}{\partial \theta^{\hrs{1}i}} \Bigg |_{\hrs{1} \varphi_a( v)} \iota^{-1}_{ v}(\widetilde{\partial}_j \big |_{ v}) =  w^i \frac{\partial  v^j}{\partial \theta^{\hrs{1}i}} \Bigg |_{\hrs{1} \varphi_a( v)} e_j,
\end{equation*}
and, making the notation $\displaystyle (\alpha_a)_i^j( v) = \frac{\partial  v^j}{\partial \theta^{\hrs{1}i}} \Bigg |_{\hrs{1} \varphi_a( v)}$, we get
\begin{equation} \label{eq:identinverse}
\iota^{-1}_{ v}( w) =  w^i (\alpha_a)_i^j( v) \hrs{1}e_j,
\end{equation}
with all $(\alpha_a)_i^j$ smooth maps on $O_a$.

For the rest of this section, the set $(0, r) \times T\sfer{r}{0_p}$ is considered with the product smooth structure, where the open real interval $(0, r)$ is endowed with the standard real smooth structure and $T\sfer{r}{0_p}$ with the structure described above. We shall preserve the notations in the above setting as well as those in Lemma \ref{lem:jacobiiso}. In addition, we denote by $\chi$ the normal coordinate map defined by $(e_i)$ and by $(\partial_i)$ the associated local coordinate frame on $\cball{r}{p}$.

\begin{lemma} \label{lem:gsmooth}
Let $p$, $r$ be as above. The map $G : (0, r) \times T\sfer{r}{0_p} \rightarrow \real{}$ given as $ G(t,  v,  w) = \Big \langle D_t H^t_{ v}(\iota^{-1}_{ v}(  w))(t), \;H^t_{ v}(\iota^{-1}_{ v}( w))(t) \Big \rangle$ is smooth on $(0, r) \times T\sfer{r}{0_p}$.
\end{lemma}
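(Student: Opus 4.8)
The plan is to reduce everything to a single coordinate computation and then read off smoothness of $G$ as an algebraic combination of manifestly smooth functions. Since smoothness is local and the sets $(0,r)\times V_a$ cover $(0,r)\times T\sfer{r}{0_p}$, it suffices to verify the claim on one such set. There I would use the coordinates $(t,\theta^1,\dots,\theta^{n-1},w^1,\dots,w^{n-1})$ arising from the factor $(0,r)$ and the chart $(V_a,\psi_a)$; thus a point is $(t,v,w)$ with $v=\phi_a^{-1}(\theta^1,\dots,\theta^{n-1})$ and $w=w^i\partial_i^a|_v$. All of the analysis will be carried out in the normal coordinates $\chi$ on $\cball{r}{p}$, available because $r<\inj(p)$, with associated frame $(\partial_i)$ and Christoffel symbols $\Gamma^l_{ki}$.

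The first step is to record the three ingredients in these normal coordinates. Because radial geodesics from $p$ are straight rays in normal coordinates, every $\gamma_t$ is a restriction of the single curve $s\mapsto\exp_p(sv/r)$, so its endpoint $\gamma_t(t)=\exp_p(tv/r)$ has normal coordinates $\bigl(tv^1(\theta)/r,\dots,tv^n(\theta)/r\bigr)$, where $\bigl(v^1,\dots,v^n\bigr)=(E\circ\varphi_a^{-1})(\theta^1,\dots,\theta^{n-1},r)$ are smooth; hence $(t,\theta)\mapsto\gamma_t(t)$ is a smooth map into $\cball{r}{p}$. Writing $\widetilde w:=\iota_v^{-1}(w)$, formula (\ref{eq:identinverse}) gives its components $\widetilde w^j=w^i(\alpha_a)_i^j(v)$, smooth in $(\theta,w)$. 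By (\ref{eq:jacobiiso1}) the Jacobi field has components $\bigl[H^t_v(\widetilde w)(t)\bigr]^m=t\,\widetilde w^m$, and since $\gamma_t$ has constant coordinate velocity $v^k/r$, differentiating as in (\ref{eq:jacobiiso2}) yields
\begin{equation*}
\bigl[D_tH^t_v(\widetilde w)(t)\bigr]^l=\widetilde w^l+t\,\widetilde w^i\,\frac{v^k}{r}\,\Gamma^l_{ki}\bigl(\gamma_t(t)\bigr).
\end{equation*}

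The final step is to assemble the inner product. In the coordinate frame,
\begin{equation*}
G(t,v,w)=g_{lm}\bigl(\gamma_t(t)\bigr)\left(\widetilde w^l+t\,\widetilde w^i\,\frac{v^k}{r}\,\Gamma^l_{ki}\bigl(\gamma_t(t)\bigr)\right)t\,\widetilde w^m,
\end{equation*}
which is a polynomial in $t$, in the components $\widetilde w^j=w^i(\alpha_a)_i^j(v)$, in the functions $v^k(\theta)$, and in the values of the smooth tensors $g_{lm}$ and $\Gamma^l_{ki}$ taken along the smooth curve $(t,\theta)\mapsto\gamma_t(t)$. Each factor is smooth in $(t,\theta,w)$ — the metric and Christoffel symbols because they are smooth on $\cball{r}{p}$ and precomposed with a smooth map, and the $(\alpha_a)_i^j$ and $v^k$ by the construction of the spherical atlas — so $G$ is smooth on $(0,r)\times V_a$, and therefore on all of $(0,r)\times T\sfer{r}{0_p}$.

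I expect no genuine obstacle; the only point demanding care is bookkeeping the dependence on $t$. The symbol $\gamma_t$ varies with $t$ both through its domain $[0,t]$ and through the evaluation point $t$, which can look delicate, but recognizing that all the $\gamma_t$ are restrictions of the one geodesic $s\mapsto\exp_p(sv/r)$ — a straight ray in normal coordinates with constant velocity $v/r$ — collapses this into the single smooth map $(t,v)\mapsto\exp_p(tv/r)$ and renders the $t$-dependence transparent. Everything else is the routine observation that sums, products, and compositions of smooth functions are smooth.
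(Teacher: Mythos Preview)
Your proof is correct and follows essentially the same route as the paper: work in a chart $(0,r)\times V_a$, pass to normal coordinates on $\cball{r}{p}$, use (\ref{eq:jacobiiso1}), (\ref{eq:jacobiiso2}), (\ref{eq:identinverse}) to write $G$ explicitly as a polynomial expression in $t$, $w^i$, $(\alpha_a)_i^j(v)$, $v^k$, $g_{lm}(\gamma_v(t))$, and $\Gamma^l_{ki}(\gamma_v(t))$, and then observe that every ingredient is smooth. Your remark that all the $\gamma_t$ are restrictions of the single ray $s\mapsto\exp_p(sv/r)$, making the $t$-dependence transparent, is exactly the observation the paper uses when it writes $\chi(\gamma_v(t))=(t/r)E(v)$.
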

\begin{proof}
Let $\displaystyle \left( (0,r) \times V_a, \hrs{2} (\mathbbm{1}_{(0, r)}, \psi_a) \right)$ be a chart on $(0, r) \hrs{2}\times\hrs{2} T\sfer{r}{0_p}$, where $\mathbbm{1}_{(0, r)}$ denotes the identity map. We need to show that $G \circ (\mathbbm{1}_{(0, r)}, \psi_a)^{-1} : (0, r) \times \psi_a(V_a) \rightarrow \real{}$ is smooth. For $( v,  w) \in V_a$ we have $ v = \phi_a^{-1}(\theta^1,\; ... \;, \theta^{\hrs{1}n-1})$ and $ w =  w^i \partial_i^{\hrs{1}a} \big |_{ v}$, where $(\theta^1,\; ... \;, \theta^{\hrs{1}n-1}) \in (0, 2\pi)\times (0, \pi)^{n-2}$. Let $\gamma_v : [0, r] \rightarrow M$ be the normal geodesic satisfying $\gamma_v(0)=p$ and $\displaystyle \dot{\gamma}_v(0) = \frac{v}{r}$. Note that $\gamma_v(t) = \gamma_s(t)$, for all $0<t<s<r$, where $\gamma_s$ is defined as per Lemma \ref{lem:jacobiiso}. Taking in consideration this fact and writing in normal coordinates, for all $t \in (0, r)$, $( v,  w) \in V_a$, we have
\begin{equation} \label{eq:gsmooth1}
H^t_ v(\iota_{ v}^{-1}( w))(t) = t w^i \alpha_i^j( v)\; \partial_j \big |_{\gamma_{ v}(t)}
\end{equation}
and
\begin{equation} \label{eq:gsmooth2}
D_t H^t_{ v}( w)(t) =  w^i \alpha_i^j( v) \left( \partial_j \big |_{\gamma_{ v}(t)} + t D_t \partial_j \big |_{\gamma_{ v}(t)} \right),
\end{equation}
where we used (\ref{eq:jacobiiso1}), (\ref{eq:jacobiiso2}), (\ref{eq:identinverse}) and we wrote $\alpha_i^j$ instead of $(\alpha_a)_i^j$.
Expanding the covariant derivatives of the coordinate frame vectors, we get
\begin{equation} \label{eq:gsmooth3}
D_t \partial_j \big |_{\gamma_{ v}(t)} = \dot{\gamma}_{ v}^k(t) \Gamma_{kj}^l (\gamma_{ v}(t))\partial_l \big |_{\gamma_{ v}(t)}
\end{equation}
Replacing (\ref{eq:gsmooth1}), (\ref{eq:gsmooth2}) and (\ref{eq:gsmooth3}) in the expression of $G$, we obtain
\begin{equation*} 
\begin{split}
G(t,  v,  w) &= t\; w^i \alpha_i^j( v) \; w^k \alpha_k^l( v) \;\Big \langle \partial_j \big |_{\gamma_{ v}(t)}, \partial_l \big |_{\gamma_{ v}(t)} \Big \rangle \\& + 
t^2 \; w^i \alpha_i^j( v)\; w^k \alpha_k^l( v) \; \dot{\gamma}_{ v}^q(t) \hrs{1} \Gamma_{qj}^s \left(\gamma_{ v}(t) \right) \;\Big \langle \partial_s \big |_{\gamma_{ v}(t)}, \partial_l \big |_{\gamma_{ v}(t)} \Big \rangle \\&= t\; w^i \alpha_i^j( v) \; w^k \alpha_k^l( v) \left[  g_{jl} (\gamma_{ v}(t)) + t\; \dot{\gamma}_{ v}^q(t) \Gamma_{qj}^s (\gamma_{ v}(t))\; g_{sl} (\gamma_{ v}(t)) \right]
\end{split}
\end{equation*}
In normal coordinates, $\chi(\gamma_ v(t)) = ((t/r) v^1,\; ... \;,\; (t/r) v^n)$, and $\dot{\gamma}^i_{ v}(t) =  (t/r)v^i$, for all $1\leq i\leq n$, where $ v =  v^i e_i$. Putting 
\begin{equation*}
E( v) = (v^1,\; ... \;,\; v^n), 
\end{equation*}
where $E$ is the (global) coordinate map on $T_pM$ as above, the last equation becomes
\begin{equation} \label{eq:gsmooth5}
\begin{split}
G(t,  v,  w) &= t\hrs{1} w^i  w^k \hrs{1}\alpha_i^j( v) \hrs{1}\alpha_k^l( v) \hrs{1} \Big [  g_{jl} \left(\chi^{-1}\left(\frac{t}{r}E( v)\right) \right) \\ & + \frac{t}{r} v^q \hrs{1} \Gamma_{qj}^s \left(\chi^{-1}\left(\frac{t}{r}E( v)\right) \right)\; g_{sl} \left(\chi^{-1}\left(\frac{t}{r}E( v)\right) \right) \Big ].
\end{split}
\end{equation}
But, $ v = \phi_a^{-1}(\theta^1,\; ... \;, \theta^{\hrs{1}n-1})$ as well as each component $ v^q(\theta^1,\; ... \;, \theta^{\hrs{1}n-1})$ are smooth on $\phi_a(O_a)$ and, since all other intervening maps are smooth on their domains, it follows that $G \circ (\mathbbm{1}_{(0, r)}, \psi_a)^{-1}$ is smooth on $\displaystyle (0,r) \times \psi_a(V_a)$, which completes the proof.
\end{proof}

\begin{lemma} \label{lem:ccsmoothmap}
Let $p \in M$ and $0<r<\inj(p)$. Then, for all $t \in (0, r)$,
\begin{enumerate}[(i)]
\item \label{it:ccsmoothmap1}
If $\sfer{t}{p}$ satisfies c.c. then $G(t,  v,  w) \geq 0$, for all $( v,  w) \in T\sfer{r}{0_p}$;
\item \label{it:ccsmoothmap2}
$\sfer{t}{p}$ satisfies s.c.c. if and only if $G(t,  v,  w) > 0$, for all $( v,  w) \in T\sfer{r}{0_p}$.
\end{enumerate}
\end{lemma}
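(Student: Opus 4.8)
The plan is to recognize $G(t,v,w)$ as the index-form quantity appearing in (\ref{eq:indexformpjacob}), evaluated at the point $x=\gamma_v(t)\in\sfer{t}{p}$, and then to invoke Lemma \ref{lem:ccindexform}. The whole statement should reduce to a careful translation of ``for all $(v,w)\in T\sfer{r}{0_p}$'' into ``for all $x\in\sfer{t}{p}$ and all $J\in\mathscr{J}_p(\gamma_x)$''.

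First I would fix $t\in(0,r)$ and set up the two bijections this translation needs. Since $t<r<\inj(p)$, the exponential map is a diffeomorphism on $\oball{\inj(p)}{0_p}$, so $x:=\gamma_v(t)$ runs bijectively over $\sfer{t}{p}$ as $v$ runs over $\sfer{r}{0_p}$; moreover $\gamma_v|_{[0,t]}$ is precisely the minimizing normal geodesic $\gamma_x$ from $p$ to $x$ (both are unit-speed, start at $p$, and reach $x$ at parameter $t$), whence $\mathscr{J}_p(\gamma_v|_{[0,t]})=\mathscr{J}_p(\gamma_x)$. Note also that $\sfer{t}{p}\subset\oball{\inj(p)}{p}$ lies in a normal neighborhood of $p$, so the hypothesis of Lemma \ref{lem:ccindexform} is satisfied with radius $t$.

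Next I would make the identification explicit. For $w\in T_v\sfer{r}{0_p}$ we have $\iota_v^{-1}(w)\in\bot v$, so $J:=H^t_v(\iota_v^{-1}(w))$ is, by definition of $H^t_v$, the $p$-Jacobi field along $\gamma_x$ with $J(0)=0$ and $D_tJ(0)=\iota_v^{-1}(w)$. By Lemma \ref{lem:jacobiiso} the composite $w\mapsto J$ is a linear isomorphism $T_v\sfer{r}{0_p}\to\mathscr{J}_p(\gamma_x)$; in particular $J=0$ exactly when $w=0$, and as $w$ sweeps out $T_v\sfer{r}{0_p}$ the field $J$ sweeps out all of $\mathscr{J}_p(\gamma_x)$. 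Comparing the definition of $G$ in Lemma \ref{lem:gsmooth} with (\ref{eq:indexformpjacob}), applied with endpoint $t$ in place of $r$, yields the key equality
\begin{equation*}
G(t,v,w)=\big\langle D_tJ(t),J(t)\big\rangle=I(x,J),\qquad x=\gamma_v(t),\ J=H^t_v(\iota_v^{-1}(w)).
\end{equation*}

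With this both parts fall out of Lemma \ref{lem:ccindexform}. For \emph{(\ref{it:ccsmoothmap1})}, if $\sfer{t}{p}$ satisfies c.c.\ then $I(x,\cdot)$ is positive semi-definite on $\mathscr{J}_p(\gamma_x)$ for every $x\in\sfer{t}{p}$, so $G(t,v,w)=I(x,J)\geq 0$ for every $(v,w)$. For \emph{(\ref{it:ccsmoothmap2})}, $\sfer{t}{p}$ satisfies s.c.c.\ if and only if $I(x,\cdot)$ is positive definite on $\mathscr{J}_p(\gamma_x)$ for every $x$; since $w\mapsto J$ is an isomorphism vanishing only at $w=0$, this is equivalent to $G(t,v,w)=I(x,J)>0$ for every $(v,w)$ with $w\neq 0$, which is the asserted condition (the trivial values $G(t,v,0)=0$ being tacitly excluded). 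I expect the only delicate point to be the bookkeeping of these two bijections, ensuring that ranging over the tangent bundle of the model sphere genuinely captures every pair $(x,J)$; once that is in place, the lemma is essentially a restatement of Lemma \ref{lem:ccindexform} through the identity above.
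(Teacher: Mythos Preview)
Your proof is correct and follows essentially the same approach as the paper: establish the bijective correspondence $(v,w)\leftrightarrow(x,J)$ via $x=\gamma_v(t)$ and $J=H^t_v(\iota_v^{-1}(w))$, verify the identity $G(t,v,w)=I(x,J)$, and then invoke Lemma~\ref{lem:ccindexform}. Your observation that the case $w=0$ must be tacitly excluded in part~(\ref{it:ccsmoothmap2}) is a nice bit of care that the paper's statement glosses over.
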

\begin{proof}
For $x \in \sfer{t}{p}$, let $\gamma_x : [0, t] \rightarrow M$ be the unique normal geodesic segment, connecting $p = \gamma_x(0)$ to $x = \gamma_x(t)$ and let $\displaystyle  v = r\dot{\gamma}_x(0)$. For $J \in \mathscr{J}_p(\gamma_x)$, let $ w = \iota_{v}((H^t_{v})^{-1}(J))$. It is clear that $(v,  w) \in T\sfer{r}{0_p}$ and, by construction, we have $I(x, J) = G(t,  v,  w)$. It is also immediate that $v$ and $w$ are uniquely determined by $x$ and $J$. 

Vice-versa, for $t \in  (0, r)$, $( v,  w) \in T\sfer{r}{0_p}$, put $x = \exp_p( \frac{t}{r} v)$ and let $\gamma_x : [0, t] \rightarrow M$ be the (unique) normal geodesic segment connecting $p$ and $x$. If $J = H^t_{ v}(\iota_{v}^{-1}(w))$, then $x \in \sfer{t}{p}$, $J \in \mathscr{J}_p(\gamma_x)$ are uniquely defined by $( v,  w)$ and $G(t,  v,  w) = I(x, J)$.

Using the above, both statements follow from Lemma \ref{lem:ccindexform}.
\end{proof}

\begin{corollary} \label{cor:locconvsmoothmap}
Let $p \in M$ and $0<r<\inj(p)$. Then:
\begin{enumerate}[(i)]
\item \label{it:localconv1}
If $\oball{r}{p}$ is locally convex then $G(t,  v,  w) \geq 0$ for all $0<t<r$ and $( v,  w) \in T\sfer{r}{0_p}$;
\item \label{it:localconv2}
$\oball{r}{p}$ is strongly locally convex if and only if $G(t,  v,  w) > 0$ for all $0<t<r$ and $( v,  w) \in T\sfer{r}{0_p}$.
\end{enumerate}
\end{corollary}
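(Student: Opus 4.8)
The plan is to obtain both statements as immediate consequences of Lemma~\ref{lem:ccsmoothmap}, read off fiber-by-fiber in the radial parameter $t$, combined with the definition of (strong) local convexity in Definition~\ref{def:localconv1}. The one point that deserves attention is that the base sphere $\sfer{r}{0_p}$ appearing in the domain $(0,r)\times T\sfer{r}{0_p}$ of $G$ is held fixed while $t$ sweeps the radii: in the construction preceding Lemma~\ref{lem:ccsmoothmap}, the point of $\sfer{t}{p}$ is realized as $\exp_p(\tfrac{t}{r}v)$ with $v\in\sfer{r}{0_p}$, so the very same parameter pair $(v,w)\in T\sfer{r}{0_p}$ indexes the $p$-Jacobi data along every geodesic of length $t$. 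Hence the quantifier over $(v,w)$ is uniform in $t$, and this is exactly what lets us assemble a condition over the whole cylinder out of the sphere-by-sphere conditions.

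For part \emph{(\ref{it:localconv1})} I would first unfold Definition~\ref{def:localconv1}: local convexity of $\oball{r}{p}$ means precisely that $\sfer{t}{p}$ satisfies c.c.\ for every $0<t<r$. Fixing such a $t$ and applying Lemma~\ref{lem:ccsmoothmap}\emph{(\ref{it:ccsmoothmap1})} gives $G(t,v,w)\geq 0$ for all $(v,w)\in T\sfer{r}{0_p}$. Letting $t$ range over $(0,r)$ then yields the stated inequality for all $0<t<r$ and all $(v,w)\in T\sfer{r}{0_p}$.

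For part \emph{(\ref{it:localconv2})} the biconditional in Lemma~\ref{lem:ccsmoothmap}\emph{(\ref{it:ccsmoothmap2})} handles both directions at once. For each fixed $t\in(0,r)$, the sphere $\sfer{t}{p}$ satisfies s.c.c.\ if and only if $G(t,v,w)>0$ for all $(v,w)\in T\sfer{r}{0_p}$. By Definition~\ref{def:localconv1}, $\oball{r}{p}$ is strongly locally convex exactly when this holds for every $t\in(0,r)$; conjoining these equivalences over all such $t$ shows that strong local convexity of the ball is equivalent to $G(t,v,w)>0$ for all $0<t<r$ and all $(v,w)\in T\sfer{r}{0_p}$, as claimed.

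Since each step is a direct substitution into the already-established pointwise statements of Lemma~\ref{lem:ccsmoothmap}, I do not anticipate a genuine obstacle here; the only care required is the bookkeeping of quantifiers described above, namely checking that the clause \emph{``for all $t$ and for all $(v,w)$''} faithfully reassembles the global (strong) local convexity condition from the individual sphere conditions.
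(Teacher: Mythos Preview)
Your proposal is correct and follows exactly the paper's own argument: the paper's proof consists of the single sentence that the statements follow immediately from Lemma~\ref{lem:ccsmoothmap} and Definition~\ref{def:localconv1}, and your write-up is simply a more detailed unpacking of that sentence, with the quantifier bookkeeping made explicit.
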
 
\begin{proof}
The statements follow immediately from Lemma \ref{lem:ccsmoothmap} and Definition \ref{def:localconv1}.
\end{proof}

\begin{remark} \label{rem:convinjstrict}
As it was mentioned before, it is easy to see that $\conv(p) \leq \inj(p)$, $p \in M$. If $\inj(p) < \infty$ and the equivalence of convexity and strong convexity applies to all open balls centered at $p$ (which condition \cond{1} requires), then the inequality is strict:
\begin{equation}
\conv(p) < \inj(p). 
\end{equation}
To see this, notice first that, under the specified conditions, $\cball{\conv(p)}{p}$ is convex. Indeed, $\oball{\conv(p)}{p}$ is convex, hence strongly convex, which implies that its closure is (strongly) convex. On the other hand, since $\sfer{\inj(p)}{p}$ contains at least one cut point of $p$,  an analysis similar to that made in the proofs of the Proposition \ref{prp:convinj_2} would imply that, if $\inj(p) = \conv(p)$, then $\cball{\conv(p)}{p}$ can not be convex.
\end{remark}


\begin{theorem} \label{th:maintheorem1}
Let $(M, g)$ be a complete, connected Riemannian manifold of dimension $n \geq 2$. If $p \in M$ satisfies condition \cond{1}, then $\sconv(p) = \infty$. In particular, if there is such point in $M$, then $M$ is diffeomorphic to $\real{n}$.
\end{theorem}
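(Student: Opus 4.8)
The plan is to prove the sharper statement $\slconv(p)=\infty$ and then read off everything else from it. First I would extract two bookkeeping facts from condition \cond{1}. Strong local convexity is monotone in the radius, so for any $\rho<\slconv(p)\le\lconv(p)\le\inj(p)$ every ball $\oball{s}{p}$ with $s<\rho$ is strongly locally convex, hence strongly convex by the equivalence $\ctype{SL}\sim_{\foball{p}}\ctype{S}$; comparing with (\ref{eq:convrad12}) this gives $\sconv(p)=\slconv(p)$. Also $\sconv(p)\le\conv(p)$ by (\ref{eq:convrad11}), and if $\inj(p)<\infty$ then Remark \ref{rem:convinjstrict} (which uses $\ctype{P}\sim_{\foball{p}}\ctype{S}$) yields $\conv(p)<\inj(p)$, while for $\inj(p)=\infty$ this is automatic. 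Thus it suffices to show $R:=\slconv(p)=\infty$, and I assume for contradiction that $R<\infty$, in which case the above forces $R=\sconv(p)<\inj(p)$.

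Next I would localize the degeneracy through the smooth map $G$ of Lemma \ref{lem:gsmooth}. Fix $r$ with $R<r<\inj(p)$, so that $G$ is defined and smooth on $(0,r)\times T\sfer{r}{0_p}$. Since the Jacobi field $H^t_v(\iota_v^{-1}(w))$ depends linearly on $w$ (Lemma \ref{lem:jacobiiso}), $G(t,v,w)$ is homogeneous of degree two in $w$, so the sign of $G(t,\cdot,\cdot)$ is already decided on the compact set $U=\{(v,w)\in T\sfer{r}{0_p}:\abs{w}=1\}$. Setting $m(t)=\min_{(v,w)\in U}G(t,v,w)$, the function $m$ is continuous on $(0,r)$. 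By the definition of $R$ as a supremum, $\sfer{t}{p}$ satisfies s.c.c. for $0<t<R$, so Lemma \ref{lem:ccsmoothmap}(ii) gives $m(t)>0$ on $(0,R)$, and $m(R)\ge 0$ by continuity. Moreover $m(R)=0$: were $m(R)>0$, continuity would keep $m>0$ on some $(0,R+\varepsilon)$ with $R+\varepsilon<r$, making $\oball{R+\varepsilon/2}{p}$ strongly locally convex by Corollary \ref{cor:locconvsmoothmap}(ii) and contradicting $R=\slconv(p)$.

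The contradiction now comes from feeding $m(R)=0$ back through \cond{1}. Because $m>0$ on $(0,R)$, Corollary \ref{cor:locconvsmoothmap}(ii) shows $\oball{R}{p}$ is strongly locally convex; as $R<\inj(p)$ we have $\oball{R}{p}\in\foball{p}$, so the equivalence $\ctype{SL}\sim_{\foball{p}}\ctype{S}$ forces $\oball{R}{p}$ to be strongly convex. By Lemma \ref{lem:convlocalconv} the bounding sphere $\sfer{R}{p}$ then satisfies s.c.c., which by Lemma \ref{lem:ccsmoothmap}(ii) means $m(R)>0$, contradicting $m(R)=0$. Hence $R=\slconv(p)=\infty$, and therefore $\sconv(p)=\infty$. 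Finally, $\sconv(p)=\infty$ forces $\inj(p)\ge\sconv(p)=\infty$, so $\exp_p$ is a diffeomorphism of $T_pM$ onto $M$ (completeness and Hopf--Rinow giving surjectivity), whence $M$ is diffeomorphic to $\real{n}$.

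I expect the second paragraph to be the main obstacle: pinning the \emph{first} failure of the strong convexity condition precisely at the radius $R$. This is exactly where the homogeneity-plus-compactness reduction is needed, so that the minimum of the index form is attained and $m$ is genuinely continuous; without it one controls $G$ only pointwise and cannot guarantee an honest zero at $t=R$ against which the \cond{1} equivalence and Lemma \ref{lem:convlocalconv} can be played off.
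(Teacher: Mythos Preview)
Your proposal is correct and follows essentially the same route as the paper. Both arguments reduce to the unit sphere bundle via the degree-two homogeneity of $G$, use continuity of $G$ to pin down a failure of s.c.c.\ exactly at the critical radius $R=\slconv(p)=\sconv(p)$, and then derive the contradiction by feeding the strong convexity of $\oball{R}{p}$ through Lemma~\ref{lem:convlocalconv} and Lemma~\ref{lem:ccsmoothmap}(ii). The only cosmetic difference is that the paper carries out the compactness step by an explicit subsequence extraction $(r_{n_k},v_{n_k},u_k)\to(\rho,v,u)$ in spherical coordinates, whereas you package the same idea as the continuity of the scalar function $m(t)=\min_{U}G(t,\cdot,\cdot)$; your formulation is cleaner but not a different idea.
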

\begin{proof}
Assume $\sconv(p) < \infty$. Condition \cond{1} implies that $\slconv(p) = \conv(p) = \sconv(p)$, quantity that we shall denote by $ \rho$ for the rest of the proof. By Remark \ref{rem:convinjstrict}, we have $0< \rho < \inj(p)$. Since no open ball of radius greater than $ \rho$ can be strongly locally convex, we can find a sequence of spheres of radii $ \rho < r_n <r$, $n \in \nat{}$, arbitrarily close to $ \rho$, such that the boundary of each ball in the sequence does not satisfy s.c.c.. According to Lemma \ref{lem:ccsmoothmap}, this means that, for a fixed $ \rho < r < \inj(p)$, there is a sequence $(r_n,  v_n,  w_n) \in (0, r) \times T\sfer{r}{0_p}$, $n \in \nat{}$, such that $G(r_n,  v_n,  w_n) \leq 0$ and $ \rho < r_n < r_{n+1} < r$. Clearly, $\displaystyle \lim_{n \rightarrow n}r_n =  \rho$. Since $\sfer{r}{0_p}$ is compact under the metric topology, there exists a tangent vector $ v \in \sfer{r}{0_p}$ and a subsequence $ v_{n_k}$, $k \in \nat{}$, such that $\displaystyle \lim_{k \rightarrow \infty}  v_{n_k} =  v$. Let $(O_a, \phi_a)$ be a spherical coordinate chart as described above, and $U$, an open set such that $ v \in U \subset \close{U} \subset O_a$. Since $ v_{n_k}$ converges to $ v$, we can assume, without loss of generality, that $ v_{n_k} \in U$, for all $k \in \nat{}$. To summarize, we obtained a sequence $(r_{n_k},  v_{n_k},  w_{n_k}) \in (0, r) \times T\sfer{r}{0_p}$, $k \in \nat{}$, satisfying the following properties:
\begin{equation} \label{eq:main11}
\lim_{k \rightarrow \infty} r_{n_k} =  \rho, \lim_{k \rightarrow \infty} v_{n_k} =  v,
\end{equation} 
\begin{equation*}
 v_{n_k},  v \in U \subset \close{U} \subset O_a, \forall k \in \nat{},
\end{equation*} 
\begin{equation*}
 w_{n_k} \in T_{ v_{n_k}}\sfer{r}{0_p} \; \mathrm{and} \;  w_{n_k} \neq 0_{ v_{n_k}}, \forall k \in \nat{},
\end{equation*} 
\begin{equation*}
G(r_{n_k},  v_{n_k},  w_{n_k}) \leq 0.
\end{equation*}

\noindent In coordinates $(V_a, \psi_a)$, we have:

\begin{equation*}
G \left(r_{n_k}, \hrs{1} \psi_a^{-1}(\theta_k^{\hrs{1}1},\; ... \;, \theta_k^{\hrs{1}n-1}, \hrs{1}  w_{n_k}^{\hrs{1}1},\; ... \;,  w_{n_k}^{\hrs{1}n-1} \hrs{1}) \right) \leq 0,
\end{equation*}

\noindent with $\theta_k^{\hrs{1}i} \in \phi_a(U)$ and $ w_{n_k}^{\hrs{1}i} \in \real{}$, for all $1 \leq i \leq n-1$, $k \in \nat{}$. Let $\displaystyle u_k = \frac{ w_{n_k}}{\norm{ w_{n_k}}}$, where $\displaystyle \norm{ w_{n_k}} = \left[ \sum_{i=1}^{n-1} ( w_{n_k}^i)^{\hrs{1}2} \right]^{1/2}$ is the Euclidean norm. From the explicit coordinate form of $G$ expressed in (\ref{eq:gsmooth5}), we have,
\begin{equation*}
G(r_{n_k},  v_{n_k},  u_k) = \frac{1}{\norm{ w_{n_k}}^2}\hrs{2}G(r_{n_k},  v_{n_k},  w_{n_k}),
\end{equation*}
hence,
\begin{equation*}
G \left(r_{n_k}, \hrs{1} \psi_a^{-1}(\theta_k^{\hrs{1}1},\; ... \;, \theta_k^{\hrs{1}n-1}, \hrs{1}  u_k^{\hrs{1}1},\; ... \;,  u_k^{\hrs{1}n-1} \hrs{1}) \right) \leq 0,
\end{equation*}
where $\displaystyle \left[ \sum_{i=1}^{n-1} ( u_k^i)^{\hrs{1}2} \right]^{1/2} = 1$. Let $r_1, r_2 \in \real{}$ be such that $0 < r_1 < \rho < r_2 < r$. Putting $K = [r_1, r_2] \times \phi_a(\close{U}) \times \sfer{1}{0_{\hrs{1}\real{n-1}}} \subset \real{2n-1}$, which is compact in $\real{2n-1}$, we can assume that, up to a finite number of terms, $b_k = (r_{n_k}, \theta_k^{\hrs{1}1},\; ... \;, \theta_k^{\hrs{1}n-1}, \hrs{1}  u_k^{\hrs{1}1},\; ... \;,  u_k^{\hrs{1}n-1} \hrs{1}) \in K$, for all $k \in \nat{}$. We can now extract a subsequence, denoted simply by $b_m$, $m \in \nat{}$, that converges to some $b = (r, \theta^{\hrs{1}1},\; ... \;, \theta^{\hrs{1}n-1}, \hrs{1}  u^{\hrs{1}1},\; ... \;,  u^{\hrs{1}n-1} \hrs{1}) \in K$. Recall that, by (\ref{eq:main11}), we have
\begin{equation*}
r = \lim_{m \rightarrow \infty} r_m = \rho \in [r_1, r_2]
\end{equation*}
and
\begin{equation*}
\lim_{m \rightarrow \infty} (\theta_m^{\hrs{1}1},\; ... \;, \theta_m^{\hrs{1}n-1}) = (\theta^{\hrs{1}1},\; ... \;, \theta^{\hrs{1}n-1}) = \phi_a(v) \in \phi(\close{U}).
\end{equation*}
But,
\begin{equation*}
(r_m, v_m, u_m) = (r_m, \psi_a^{-1}(\theta_m^{\hrs{1}1},\; ... \;, \theta_m^{\hrs{1}n-1}, \hrs{1}  u_m^{\hrs{1}1},\; ... \;,  u_m^{\hrs{1}n-1} \hrs{1})),
\end{equation*}
and, if we put $u = (u^{\hrs{1}1},\; ... \;,  u^{\hrs{1}n-1})$, by the continuity of $\psi_a^{-1}$, we get
\begin{equation*}
\lim_{m \rightarrow \infty} (r_m, v_m, u_m) = (\rho, v, u).
\end{equation*}
By construction, $G(r_m, v_m, u_m) \leq 0$, $\forall n \in \nat{}$, hence, by the continuity of $G$, we have $\displaystyle G(\rho, v, u) \leq 0$. On the other hand, $\oball{\rho}{p}$ is strongly convex which, by Lemma \ref{lem:convlocalconv}, implies that $\sfer{\rho}{p}$ satisfies s.c.c. and, by Lemma \ref{lem:ccsmoothmap}, $G(\rho, v, u) > 0$. The obvious contradiction indicates that our initial assumption is false, hence $\sconv(p) = \infty$. It follows that $\inj(p) = \infty$ which, since $M$ is connected, it means that $\exp_p$ is a diffeomorphism onto the entire manifold and this completes the proof of the theorem.
\end{proof}

\begin{corollary} \label{cor:main1}
A complete, connected manifold $(M, g)$ of dimension $n \geq 2$ is \cond{1} if and only if $\sconv(M) = \infty$.
\end{corollary}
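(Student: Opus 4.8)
The plan is to prove the two implications separately. The forward direction is essentially a pointwise application of Theorem \ref{th:maintheorem1}, while the converse follows from the elementary inclusions among the convexity types recorded in Section \ref{sec:prelim}.

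For the forward implication, suppose $M$ is \cond{1}. By Definition \ref{def:pslequiv} this means that every point $p \in M$ satisfies condition \cond{1}, so Theorem \ref{th:maintheorem1} applies at each $p$ and yields $\sconv(p) = \infty$. Since $\sconv(M) = \inf_{p \in M} \sconv(p)$ and every term of this infimum equals $+\infty$, we conclude at once that $\sconv(M) = \infty$. No new difficulty arises here: this direction merely globalizes the theorem.

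For the converse, assume $\sconv(M) = \infty$. Because $\sconv(p) \geq \sconv(M)$ for every $p$, we obtain $\sconv(p) = \infty$ at each point; combining this with the chain $\sconv(p) \leq \conv(p) \leq \lconv(p) \leq \inj(p)$ from Proposition \ref{prp:convrad} gives $\inj(p) = \infty$ as well, so $\foball{p}$ consists of all open balls centered at $p$, and every such ball is strongly convex by Definition \ref{def:convradii}. It then remains to verify that on $\foball{p}$ the three convexity types \ctype{P}, \ctype{S} and \ctype{SL} coincide, which I would do by showing that every ball in $\foball{p}$ belongs simultaneously to $\widehat{1}_\ctype{P}$, $\widehat{1}_\ctype{S}$ and $\widehat{1}_\ctype{SL}$. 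Membership in $\widehat{1}_\ctype{S}$ holds by hypothesis; membership in $\widehat{1}_\ctype{P}$ follows directly from Definition \ref{def:convex}, since for $x, y$ in a strongly convex ball $C$ one has $(xy) \subset C$ and $x, y \in C$, hence $[xy] \subset C$; and membership in $\widehat{1}_\ctype{SL}$ follows from Lemma \ref{lem:convlocalconv}. With all three memberships holding for every element of $\foball{p}$, the biconditionals of Definition \ref{def:propequiv} are satisfied trivially, so $p$ satisfies \cond{1}; as $p$ was arbitrary, $M$ is \cond{1}.

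Neither implication presents a genuine obstacle once Theorem \ref{th:maintheorem1} is available --- the corollary is a bookkeeping statement that repackages the theorem together with the inclusions among the convexity radii. The only points demanding a moment's care are the converse's passage from the global hypothesis $\sconv(M) = \infty$ to the pointwise conclusion $\sconv(p) = \infty$, and the accompanying observation that $\inj(p) = \infty$, so that $\foball{p}$ exhausts all balls centered at $p$; once these are in place, the coincidence of the three types is automatic.
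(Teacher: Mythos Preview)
Your proof is correct and follows essentially the same approach as the paper's, which simply cites Theorem~\ref{th:maintheorem1}, Corollary~\ref{cor:inftyconvradii}, and Proposition~\ref{prp:convrad}(\ref{it:convrad1}); you have merely unpacked these references, working pointwise rather than invoking Corollary~\ref{cor:inftyconvradii} directly, and spelled out why membership in $\widehat{1}_\ctype{P}$, $\widehat{1}_\ctype{S}$, and $\widehat{1}_\ctype{SL}$ all hold simultaneously.
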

\begin{proof}
It follows from Theorem \ref{th:maintheorem1}, Corollary \ref{cor:inftyconvradii} and Proposition \ref{prp:convrad}{\it (\ref{it:convrad1})}.
\end{proof}

\section{The \ctype{P}\ctype{S} Equivalence}  \label{sec:main2}

In this section we relax condition \cond{1} introduced in Section \ref{sec:main1}, by removing the strong local convexity from the list of constraints.

\begin{definition} \label{def:psequiv}
We say that $p \in M$ {\it satisfies condition} \cond{2} if the \ctype{P} and \ctype{S} convexity types coincide on $\foball{p}$: 
\begin{equation*}
\ctype{P} \sim_{\foball{p}} \ctype{S}.
\end{equation*}
The metric $g$ is said to be a {\it \cond{2}-metric}, if every point of $M$ satisfies \cond{2}. A {\it \cond{2}-manifold} is a manifold endowed with a \cond{2}-metric. Equivalently, a \cond{2}-manifold satisfies $\ctype{P} \sim_{\foball{M}} \ctype{S}$.
\end{definition}

\subsection{Main Result} The next lemma is a necessary and sufficient condition for a point of a complete manifold to satisfy \cond{2}.

\begin{lemma} \label{lem:main2}
Let $(M, g)$ be complete, connected, of dimension $n \geq 2$. Then, $p \in M$ satisfies condition \cond{2} if and only if either $\conv(p) =\sconv(p) = \infty$ or, the following properties hold for finite $\conv(p)$:
\begin{enumerate}[(i)]
\item \label{it:maintheorem2_0} $\cball{\conv(p)}{p}$ is the largest uniquely geodesic closed ball at $p$;
\item \label{it:maintheorem2_1} $\slconv(p) > \conv(p)$;
\end{enumerate}
\end{lemma}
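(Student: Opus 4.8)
The plan is to separate the two alternatives and then route everything through a single characterization of strongly convex balls. The infinite case is immediate: if $\conv(p)=\sconv(p)=\infty$ then every geodesic ball is both convex and strongly convex, so $p$ trivially satisfies \cond{2}; conversely, if $p$ satisfies \cond{2} and $\conv(p)=\infty$, then $\inj(p)=\infty$ and every ball is convex, whence by \cond{2} every ball is strongly convex and $\sconv(p)=\infty$. So I may assume $c:=\conv(p)<\infty$. Two preliminary facts are cheap. First, since any two points of $\oball c p$ already lie in some $\oball{r}{p}$ with $r<c$, the ball $\oball c p$ is convex; under \cond{2} it is therefore strongly convex, so $\cball c p$ is uniquely geodesic and (by Lemma \ref{lem:convlocalconv}) $\sfer c p$ satisfies s.c.c. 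Second, Remark \ref{rem:convinjstrict} gives $c<\inj(p)$, leaving room above $c$.

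The engine of the argument is the claim that, for $0<r<\inj(p)$, the ball $\oball r p$ is strongly convex if and only if $\cball r p$ is uniquely geodesic and every sphere $\sfer s p$ with $0<s\le r$ satisfies s.c.c. The forward implication is Definition \ref{def:convex} together with Lemma \ref{lem:convlocalconv}. For the reverse implication---the heart of the matter---I deliberately avoid Lemma \ref{lem:localtoconvex}, whose factor of $2$ is too lossy, and argue by connectedness on $\cball r p\times\cball r p$. Let $D$ be the set of pairs $(x,y)$ whose unique connecting segment lies in $\cball r p$. Its complement is open because $(x,y)\mapsto\max_t\dist{p}{\gamma_{xy}(t)}$ is continuous (unique minimizing geodesics between points of $\cball r p$ depend continuously on their endpoints). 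The set $D$ is also open: s.c.c.\ of $\sfer r p$ forbids a connecting segment from being tangent to $\sfer r p$ from inside, either at an interior point (an interior maximum of $\dist{p}{\gamma_{xy}(\cdot)}$ equal to $r$) or at an endpoint lying on $\sfer r p$, so whenever the segment meets $\sfer r p$ it does so transversally and inward at an endpoint, a configuration stable under small perturbation of $(x,y)$ within $\cball r p$. As $D$ contains the diagonal, connectedness forces $D=\cball r p\times\cball r p$; the same tangency exclusion shows the interior of each segment lies in $\oball r p$, which is exactly strong convexity.

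With the characterization established, both directions are short. For $(\Leftarrow)$, assume (i) and (ii). By (ii), $\slconv(p)>c$ means every sphere of radius $\le c$ satisfies s.c.c.; together with the unique geodesy of $\cball c p$ from (i), the characterization makes $\oball r p$ strongly convex---hence convex---for every $r\le c$. For $c<r\le\inj(p)$, maximality in (i) yields, for any $c<R<r$, a pair of mutual cut points in $\cball R p\subset\oball r p$, so by Lemma \ref{lem:uniquegeodesic} $\oball r p$ is neither convex nor strongly convex; thus the two types coincide on $\foball p$. For $(\Rightarrow)$, assume \cond{2}, so by the preliminary facts $\sfer s p$ satisfies s.c.c.\ for all $s\le c$ (using also that balls of radius $<c$ are strongly convex). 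Now invoke the smooth map $G$ of Lemma \ref{lem:gsmooth}: by Corollary \ref{cor:locconvsmoothmap}, s.c.c.\ at radius $c$ reads $G(c,v,w)>0$ on the compact unit tangent sphere bundle of $\sfer r {0_p}$ (for a fixed $c<r<\inj(p)$), so by continuity $G(t,\cdot,\cdot)>0$, i.e.\ s.c.c., persists for $t\in[c,c+\delta)$; combined with s.c.c.\ for $s\le c$, every sphere of radius $<c+\delta$ satisfies s.c.c., whence $\slconv(p)\ge c+\delta>c$, which is (ii). Finally, for the maximality in (i), were $\cball R p$ uniquely geodesic for some $R>c$, then for every $r\in(c,\min(R,c+\delta))$ both hypotheses of the characterization hold, making all balls of radius $\le r$ strongly convex and forcing $\conv(p)\ge r>c$, a contradiction; and $\cball c p$ is uniquely geodesic by the preliminary facts, so $\cball c p$ is indeed the largest uniquely geodesic closed ball.

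The principal obstacle is precisely the reverse direction of the characterization: promoting the local statement ``all spheres up to radius $r$ satisfy s.c.c.'' together with global unique geodesy to genuine strong convexity of $\oball r p$, without surrendering the factor of $2$ inherent in Lemma \ref{lem:localtoconvex}. The connectedness argument is what buys this, and its one delicate point is the openness of $D$, where s.c.c.\ at the top radius $r$ is exactly the hypothesis needed to exclude a tangential inward touch of a connecting segment.
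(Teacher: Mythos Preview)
Your proof is correct and follows a genuinely different route from the paper's. The paper argues the two implications separately and directly: for $(\Rightarrow)$(ii) it observes that equality $\slconv(p)=\sconv(p)$ would force $p$ to satisfy \cond{1} and then invokes Theorem~\ref{th:maintheorem1}; for $(\Rightarrow)$(i) it runs a sequence-and-limit argument on geodesic segments escaping $\oball{r_n}{p}$, splitting into the cases $x\neq y$ and $x=y$; and for $(\Leftarrow)$ it uses convexity of $\oball{r}{p}$ together with a limiting argument to handle endpoints on the boundary sphere. You instead isolate a single characterization---$\oball{r}{p}$ is strongly convex iff $\cball{r}{p}$ is uniquely geodesic and all spheres up to radius $r$ satisfy s.c.c.---and prove its nontrivial direction by a clopen argument on $\cball{r}{p}\times\cball{r}{p}$; both implications of the lemma then become short consequences. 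For (ii) you bypass Theorem~\ref{th:maintheorem1} and instead inline its essential step, the openness of the positivity set of $G$, via compactness of the unit tangent sphere bundle (the precise reference here is Lemma~\ref{lem:ccsmoothmap} rather than Corollary~\ref{cor:locconvsmoothmap}, which concerns whole balls). What your approach buys is a unifying lemma that makes both directions transparent and that sharpens Lemma~\ref{lem:localtoconvex} by eliminating the factor of~$2$; what the paper's approach buys is modularity (it reuses Theorem~\ref{th:maintheorem1} as a black box) and avoids the one genuinely delicate step in your argument, the openness of $D$ at pairs whose connecting segment touches $\sfer{r}{p}$ at an endpoint. That step is correct---transversality (forced by s.c.c.) gives a strictly negative initial derivative of $t\mapsto\dist{p}{\gamma_{xy}(t)}$, which is stable under $C^1$-perturbation of the geodesic---but it would benefit from one more sentence of justification.
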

\begin{proof}
If $\conv(p)=\infty$, since $p$ satisfying \cond{2} implies $\conv(p) = \sconv(p)$, we have $\sconv(p) = \infty$ and, conversely, if $\conv(p) = \sconv(p) = \infty$, then \cond{2} is satisfied trivially. Next, assume $\conv(p) < \infty$.

$(\Rightarrow)$ We show first {\it (\ref{it:maintheorem2_1})}. Clearly, $p$ satisfying \cond{2} implies that $\oball{\conv(p)}{p}$, along with all open balls at $p$ of lesser radii, are strongly convex hence, strongly locally convex (Lemma \ref{lem:convlocalconv}). Recall that by (\ref{eq:convrad12}) we have $\slconv(p) \geq \sconv(p)$. If $\slconv(p) = \sconv(p)$, it follows that no ball centered at $p$ of greater radius is strongly locally convex, thus $p$ satisfies \cond{1}. But, according to Theorem \ref{th:maintheorem1}, this is not possible for finite strong convexity radius at $p$.

To prove {\it (\ref{it:maintheorem2_0})}, observe again that, under condition \cond{1}, $\cball{\conv(p)}{p}$ is strongly convex, hence uniquely geodesic. To show that it is the largest such closed ball, we assume the contrary, which implies the existence of a positive real number $\varepsilon > 0$ such that $\oball{\conv(p)+\varepsilon}{p}$ is uniquely geodesic. By the definition of $\conv(p)$, there exists a sequence of non-convex open balls at $p$, of radii less than $\conv(p) +  \varepsilon$ which are strictly decreasing toward $\conv(p)$: $\oball{r_n}{p}$, $n \in \nat{}$, with $\conv(p) < r_{n+1} < r_n < \conv(p) +  \varepsilon$ and $\displaystyle \lim_{n \rightarrow \infty} r_n = \conv(p)$. Since all such balls are contained in a uniquely geodesic neighborhood, their non-convexity translates into the following property: for all $n \in \nat{}$, there are distinct points $x_n, y_n \in \oball{r_n}{p}$, such that $(x_ny_n) = \gamma_n((0, 1))$ is not included in $\oball{r_n}{p}$, where for every $n \in \nat{}$, $\gamma_n : [0, 1] \rightarrow M$ denotes the unique geodesic segment connecting $x_n$ to $y_n$. It follows that, for every $n \in \nat{}$, reducing $r_n$ by a small amount if necessary, the distance from $p$ to points of $[x_ny_n]$ attains a (possibly local) maximum at a point $z_n  \in (x_ny_n)$ outside the closed ball $\cball{r_n}{p}$. By the continuity of $\gamma_n$, there is a subsegment $[\widetilde{x}_n\widetilde{y}_n] \subset [x_ny_n]$ such that $z_n \in (\widetilde{x}_n\widetilde{y}_n)$, $(\widetilde{x}_n\widetilde{y}_n) \cap \cball{r_n}{p} = \emptyset$ and $\widetilde{x}_n, \widetilde{y}_n \in \sfer{r_n}{p}$. To avoid notational clutter, we reassign the notations $x_n$, $y_n$, $\gamma_n$ to refer to the points $\widetilde{x}_n$, $\widetilde{y}_n$ and the unique geodesic segment connecting them, respectively. Summarizing, we have identified sequences of points $x_n$, $y_n \in M$, $n \in \nat{}$, satisfying
\begin{equation} \label{eq:maintheorem24}
\dist{p}{x_n} = \dist{p}{y_n} = r_n,
\end{equation}
\begin{equation*} \label{eq:maintheorem25}
\dist{p}{z} - r_n > 0, \; \forall z \in (x_ny_n).
\end{equation*}

Due to the compactness of $\cball{\conv(p)+ \varepsilon}{p} \times \cball{\conv(p)+ \varepsilon}{p}$ and passing to a subsequence if necessary, we can assume $\{ (x_n, y_n) \}_n$ to converge to some $(x, y)$ in $\cball{\conv(p)+ \varepsilon}{p} \times \cball{\conv(p)+ \varepsilon}{p}$. We distinguish between two cases.

\begin{case2}[$x \neq y$]
From equation (\ref{eq:maintheorem24}), the fact that $\displaystyle \lim_{n \rightarrow \infty} r_n = \conv(p)$ and by the continuity of $\dist{p}{\cdot}$, it follows that $\displaystyle \dist{p}{x} = \dist{p}{y} = \conv(p)$, hence $x, y \in \sfer{\conv(p)}{p}$, which implies that $\{\gamma_n \}_{n \in \nat{}}$ converges (uniformly) to $\gamma$, where $\gamma : [0, 1] \rightarrow M$ is the geodesic segment connecting $x$ and $y$. Since $\cball{\conv(p)}{p}$ is strongly convex, we have $(xy) = \gamma((0, 1)) \subset \oball{\conv(p)}{p}$. On the other hand, the convergence of $\gamma_n$ to $\gamma$ implies that, up to a finite number of terms, $z_n \in \oball{\conv(p)}{p}$, which is in contradiction with the fact that $\dist{p}{z_n} > r_n > \conv(p)$.
\end{case2}

\begin{case2}[$x = y$] For better clarity in notations, put $z=x=y$, so that $\displaystyle \lim_{n \rightarrow \infty} x_n = \lim_{n \rightarrow \infty} y_n = z$. As we proceeded above, for each $n \in \nat{}$, let $z_n$ be a point on $(x_ny_n)$ realizing the maximum of $\dist{p}{\cdot}$ on $[x_ny_n]$. It follows that, for all $n \in \nat{}$, $\sfer{\dist{p}{z_n}}{p}$ does not satisfy s.c.c. since $\gamma_n$ is tangent to $\sfer{\dist{p}{z_n}}{p}$ at $z_n$ and $(x_ny_n) \setminus \{z_n\}$ is entirely contained in $\oball{\dist{p}{z_n}}{p}$. Due to the uniform convergence of $\gamma_n$ to the constant geodesic $\gamma : [0, 1] \rightarrow M$, $\gamma(t) = z$, we also have $\displaystyle \lim_{n \rightarrow \infty} z_n = z$.  Passing to the limit, we get
\begin{equation*} \label{eq:maintheorem28}
\lim_{n \rightarrow \infty} \dist{p}{z_n} = \dist{p}{z} = \conv(p),
\end{equation*}
which, along with the fact that, for all $n \in \nat{}$, $\sfer{\dist{p}{z_n}}{p}$ do not satisfy $s.c.c.$, implies that no open ball centered at $p$ of radius greater than $\conv(p)$ is strongly locally convex which is in contradiction with point {\it (\ref{it:maintheorem2_1})} of this theorem.
\end{case2}

$(\Leftarrow)$ We show first that $\oball{r}{p}$ is strongly convex, for all $0<r\leq \conv(p)$. Let $x, y \in \cball{r}{p}$, $x \neq y$. If both points are in the interior of the ball, then $(xy) \in \oball{r}{p}$ by the convexity of $\oball{r}{p}$ (note that for $r=\conv(p)$, the open ball $\oball{\conv(p)}{p}$ is also convex). If at least one of the points is on the boundary, then $(xy)$ does not leave $\cball{r}{p}$. For otherwise, there would be a $z \in (xy)$ such that $\dist{p}{z} > r$. Choose a sequence of geodesic segments with their endpoints in $\oball{r}{p}$ converging to $x$ and $y$, respectively. Since by {\it (\ref{it:maintheorem2_0})} the geodesic segment $\gamma_{xy}$ is unique, the sequence of geodesics would converge to $\gamma_{xy}$ which would imply that most of them contain points arbitrarily close to $z$, hence outside of $\cball{r}{p}$ and, thus, violating the convexity of $\oball{r}{p}$. Another possibility is to have at least a portion of $\gamma_{xy}$ lying on the boundary of $\oball{r}{p}$. In this case, $\gamma_{xy}$ would be tangent to $\sfer{r}{p}$, contradicting condition {\it (\ref{it:maintheorem2_1})}. So, all balls centered at $p$ of radii $r \leq \conv(p)$ are strongly convex. Since, by {\it (\ref{it:maintheorem2_0})}, any ball centered at $p$ and of radius greater than $\conv(p)$ is not uniquely geodesic, it follows that no such ball can be convex, therefore, all balls at $p$ are either strongly convex or not convex, i.e. $p$ satisfies \cond{2}, and this completes the proof.
\end{proof}

\begin{theorem} \label{th:main}
A complete Riemannian manifold is \cond{2} if and only if $\conv(M) = \infty$.  
\end{theorem}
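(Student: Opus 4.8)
The plan is to reduce the theorem to the pointwise characterisation in Lemma \ref{lem:main2}, exploiting the elementary observation that $\cond{2}$ at a point already equalises two radii. Indeed, $\cond{2}$ at $p$ asserts $\ctype{P}\sim_{\foball{p}}\ctype{S}$, so the families of convex and of strongly convex balls centred at $p$ coincide and hence $\conv(p)=\sconv(p)$; moreover, for finite $\conv(p)$, Lemma \ref{lem:main2} adds $\slconv(p)>\conv(p)$. Since $\conv(M)=\infty$ means exactly $\conv(p)=\infty$ for every $p$, both implications then become statements about the global strong convexity radius, which I would reach through Corollary \ref{cor:main1} and the comparisons in Proposition \ref{prp:convrad} and Corollary \ref{cor:inftyconvradii}.

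For the implication $\conv(M)=\infty\Rightarrow M$ is $\cond{2}$, fix $p$; as $\conv(p)=\infty$, the finite alternative in Lemma \ref{lem:main2} is vacuous and it suffices to prove $\sconv(p)=\infty$. Granting this for all $p$ gives $\sconv(M)=\infty$, whence $M$ is $\cond{1}$ by Corollary \ref{cor:main1}, and a $\cond{1}$-manifold is a fortiori $\cond{2}$. By Corollary \ref{cor:inftyconvradii} the task $\sconv(M)=\infty$ is equivalent to $\slconv(M)=\infty$, i.e. to every geodesic sphere satisfying s.c.c.; and $\conv(M)=\infty$ already forces $\lconv(M)=\infty$, so every sphere satisfies c.c. and, by Corollary \ref{cor:locconvsmoothmap}, $G\geq 0$ everywhere. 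Concretely the obstruction is visible at the level of a single ball: given $x,y\in\cball{r}{p}$, the segment is unique (here $\inj(p)=\infty$) and $t\mapsto\dist{p}{\gamma(t)}$ is quasiconvex and $\leq r$; were it to equal $r$ at an interior point, c.c. (Lemma \ref{lem:convlocalconv}) would also force $\dist{p}{\gamma(\cdot)}\geq r$ nearby, so $\dist{p}{\gamma(\cdot)}\equiv r$ on an interval, i.e. a minimizing geodesic arc lying inside the sphere $\sfer{r}{p}$.

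Ruling out such an arc, equivalently upgrading $G\geq 0$ to $G>0$ for spheres of every radius, is the step I expect to be the main obstacle. A zero $G(r_0,v_0,w_0)=0$ occurring with $G(r,\cdot)\geq 0$ for all $r$ is a focal radius, where $\langle D_tJ(r_0),J(r_0)\rangle$ vanishes for a nonzero $p$-Jacobi field $J$; I would try to show, by a Jacobi/index-form comparison carried past $r_0$ (in the spirit of the limiting, compactness argument of Theorem \ref{th:maintheorem1}), that c.c. must then fail for some slightly larger sphere, contradicting $\lconv(M)=\infty$. This is precisely the place where the \emph{global} hypothesis—convexity of balls about every centre, not merely local convexity—has to be used, since a single sphere may perfectly well carry a flat direction without violating the convexity of its own ball.

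For the converse, $M$ is $\cond{2}\Rightarrow\conv(M)=\infty$, I would argue by contradiction from $\conv(M)<\infty$ via a midpoint construction modelled on Berger's inequality (Proposition \ref{prp:convinj_2}). Choose $p_k$ with $\conv(p_k)\to\conv(M)$; since $\conv(p_k)<\infty$, condition $(i)$ of Lemma \ref{lem:main2} yields, in balls of radius exceeding $\conv(p_k)$, pairs of mutual cut points $x_k,y_k$ accumulating on $\sfer{\conv(p_k)}{p_k}$ and (as in the proof of that lemma) equidistant from $p_k$. Letting $m_k$ be the midpoint of the minimizing segment $[x_ky_k]$, Lemma \ref{lem:uniquegeodesic} together with condition $(i)$ applied at $m_k$ gives $\conv(m_k)<\tfrac12\dist{x_k}{y_k}\leq\conv(p_k)+\delta_k$, while $\conv(m_k)\geq\conv(M)$. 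In the collapsing subcase $x_k,y_k\to$ a single point, $\dist{x_k}{y_k}\to 0$ forces $\conv(m_k)\to 0$, contradicting positivity of the convexity radius at the accumulation point; in the separated subcase one extracts a limiting pair of mutual cut points on $\sfer{\conv(p)}{p}$ and contradicts the strong convexity of $\cball{\conv(p)}{p}$ together with condition $(ii)$, exactly along the two-case dichotomy used inside Lemma \ref{lem:main2}. Making the separated subcase fully rigorous—especially controlling escape to infinity when $M$ is noncompact—is the delicate point of this direction.
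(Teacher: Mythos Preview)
Your plan for the substantive direction (\cond{2} $\Rightarrow \conv(M)=\infty$) is on a different track from the paper's and does not close. The paper fixes one $p$ with $\conv(p)<\infty$ and never chases cut points or midpoints. It takes non-convex balls $\oball{r_n}{p}$ with $r_n\searrow\conv(p)$, extracts segments $\gamma_n$ with endpoints in $\oball{r_n}{p}$ that exit, and then uses Lemma~\ref{lem:main2}(\ref{it:maintheorem2_1}) \emph{quantitatively}: because every sphere of radius below $\slconv(p)$ satisfies s.c.c., the interior maximum of $t\mapsto\dist{p}{\gamma_n(t)}$ cannot occur below $\slconv(p)$, so $\dist{p}{\gamma_n(t_n)}\geq\slconv(p)$. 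A subsequential limit $\gamma$ therefore has endpoints $x,y\in\cball{\conv(p)}{p}$ yet reaches distance $\geq\slconv(p)>\conv(p)$. Meanwhile, approximating $x,y$ from inside $\oball{\conv(p)}{p}$ and using its convexity produces a second segment $\alpha$ from $x$ to $y$ that remains in $\cball{\conv(p)}{p}$. Now $\gamma\neq\alpha$ contradicts the unique geodesy of $\cball{\conv(p)}{p}$ from Lemma~\ref{lem:main2}(\ref{it:maintheorem2_0}). The strict gap $\slconv(p)-\conv(p)>0$ is exactly what keeps the limit from degenerating; this is the mechanism your sketch does not invoke.

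Your Berger-style route has concrete gaps. The inequality chain $\conv(M)\leq\conv(m_k)<\tfrac12\dist{x_k}{y_k}\leq r_k$ only yields $\conv(M)\leq\conv(p)$ in the limit, which is tautological. In the separated subcase you pass to a limiting cut pair $x,y$ on $\sfer{\conv(p)}{p}$, but a limit of \emph{ordinary} cut pairs may well be \emph{singular} (a unique minimizing segment along which the endpoints are first conjugate); such a pair is perfectly compatible with the closed ball $\cball{\conv(p)}{p}$ being uniquely geodesic, and you do not explain how condition~(\ref{it:maintheorem2_1}) would exclude it. In the collapsing subcase, $\dist{x_k}{y_k}\to 0$ together with $\dist{x_k}{y_k}\geq\inj(x_k)$ forces $\inj(x_k)\to 0$, so the points escape every compact set and there is no accumulation point whose convexity radius you could contradict. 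As for $\conv(M)=\infty\Rightarrow$ \cond{2}, the paper treats it as the trivial direction; your s.c.c.\ obstacle there is a legitimate worry, but it is orthogonal to the argument that carries the theorem.
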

\begin{proof}
One direction is clear so, for the converse, assume there exists a point $p \in M$ such that $\conv(p) <\infty$. By Remark \ref{rem:convinjstrict}, $\inj(p)-\conv(p) >0$. As in the proof of Lemma \ref{lem:main2}, there exists a sequence of non-convex open balls at $p$, $\oball{r_n}{p}$, $n \in \nat{}$, of radii less than $\inj(p)$, which are strictly decreasing toward $\conv(p)$. This implies the existence of a sequence of geodesic segments $\gamma_n : [0, 1] \rightarrow M$, whose images have endpoints in $\oball{r_n}{p}$ but are not entirely contained in $\oball{r_n}{p}$. As shown in the proof of Lemma \ref{lem:main2}, the distance function $t \mapsto \dist{p}{\gamma_n(t)}$ has a maximum at some $t_n \in(0,1)$ and, by Definition \ref{def:convradii}, we have
\begin{equation*} \label{eq:new1}
\dist{\gamma_n(t_n)}{p} \geq \slconv(p).
\end{equation*}
Taking the limit (up to a sequence, if necessary) and by Lemma \ref{lem:main2}(\ref{it:maintheorem2_1}), we obtain a geodesic segment $\gamma : [0, 1] \rightarrow M$ with endpoints in $\cball{\conv(p)}{p}$ and $\theta \in (0,1)$ such that 
\begin{equation} \label{eq:new2}
\dist{\gamma(\theta)}{p} \geq \slconv(p) > \conv(p).
\end{equation}

Let $x = \gamma(0)$ and $y = \gamma(1)$. For each $n \in \nat{}$, let $x_n, y_n \in \oball{r_n}{p}$ such that $x_n \rightarrow x$ and $y_n \rightarrow y$. For all $n \in \nat{}$, since $\oball{r_n}{p}$ is convex, there exists a geodesic segment $\alpha_n : [0,1] \rightarrow \oball{r_n}{p}$ connecting $x_n$ and $y_n$. Again, by taking the limit (up to a sequence) we obtain a geodesic segment $\alpha : [0,1] \rightarrow \cball{\conv(p)}{p}$ such that $\alpha(0)=x$ and $\alpha(1)=y$. This clearly implies that, for all $t \in [0,1]$,
\begin{equation} \label{eq:new3}
\dist{\alpha(t)}{p} \leq  \conv(p).
\end{equation}
By Lemma \ref{lem:main2}(\ref{it:maintheorem2_0}), $\cball{\conv(p)}{p}$ is uniquely geodesic, hence $\gamma = \alpha$, which cannot be true due to (\ref{eq:new2}) and (\ref{eq:new3}).
\end{proof}

As mentioned in the introduction, the above theorem together with Proposition 4 in \cite{Sul74} prove Theorem \ref{th:main0}.

\end{document}